\newtheorem{theorem}{Theorem}[section]
\newtheorem{lemma}{Lemma}
\newtheorem{proposition}{Proposition}
\newtheorem{remark}{Remark}
\newtheorem*{main-theorem}{Main Theorem}
\newtheorem*{remark*}{Remark}
\newtheorem*{lemma*}{Lemma A.1}
\numberwithin{equation}{section}
\begin{document}
\title[Two-phase flow model with vacuum]{Non-existence of classical solutions to a two-phase flow model with vacuum}
	
\author{Hai-Liang Li}
		
\address{School of Mathematics and CIT, Capital Normal University, Beijing 100048, People’s Republic of China.}
\email{hailiang.li.math@gmail.com}

\author{Yuexun Wang}

\address{School of Mathematics and Statistics, Lanzhou University, Lanzhou 730000, People’s Republic of China.}
\email{yuexunwang@lzu.edu.cn}

\author{Yue Zhang}
	
\address{School of Mathematics and CIT, Capital Normal University, Beijing 100048, People’s Republic of China.}
\email{yuezhangmath@126.com}

\begin{abstract}
In this paper, we study the well-posedness of classical solutions to a two-phase flow model consisting of the pressureless Euler equations coupled with the isentropic compressible Navier-Stokes equations via a drag forcing term. We consider the case that the fluid densities may contain a vacuum, and the viscosities are density-dependent functions. Under suitable assumptions on the initial data, we show that the finite-energy (i.e., in the inhomogeneous Sobolev space) classical solutions to the Cauchy problem of this coupled system do not exist for any small time. 
\end{abstract}
	
\keywords{Two-phase flow, pressureless Euler equations, compressible Navier-Stokes equations, well-posedness, classical solution}
	
\subjclass[2020]{35A01, 76N06, 35Q31}

\maketitle

\section{Introduction}
This paper concerns the pressureless Euler-Navier-Stokes (Euler-NS) system for a two-phase flow, which is given by
\begin{equation}\label{main0}
	\left\{
	\begin{aligned}
	& \partial_{t}\rho+\text{div} (\rho u)=0,\\
	& \partial_{t}(\rho u)+\text{div} (\rho u\otimes u)=-\rho (u-w),\\
	&\partial_{t}n+\text{div} (nw)=0,\\
	&\partial_{t}(nw)+\text{div} ( n w\otimes w)+\nabla P=\text{div}\tau+\rho(u-w),
	\end{aligned}
	\right.
\end{equation}
where $(x,t)\in\mathbb{R}^d\times[0,\infty)$. Here $\rho=\rho(x,t)$ and $u=u(x,t)$ represent the scalar density and the velocity field of the pressureless Euler equations, while $n=n(x,t)$, $w=w(x,t)$, and $P=P(n)$ represent the scalar density, the velocity field, and the pressure potential of the isentropic compressible Navier-Stokes equations. The equation of state takes the form
\begin{equation}\label{state}
	P(n)=A n^\gamma,\quad \gamma>1. 
\end{equation}
We will set the constant $A$ to be the unit for simplicity.  The viscous stress tensor $\tau$ is defined by
\begin{equation}
\tau=\nu(n)(\nabla w+(\nabla w)^\top)+\lambda(n)\text{div} w \mathbb{I}_d,
\end{equation}
where the coefficients $\nu=\nu(n)$ and $\lambda=\lambda(n)$ are density-dependent functions satisfying
\begin{equation*}
	\nu(n)>0,\quad 2\nu(n)+d\cdot\lambda(n)\geq 0. 
\end{equation*}
The initial data is posed by
\begin{equation}\label{initial}
	(\rho,u,n,w)(x,0)=(\rho_0,u_0,n_0,w_0)(x).
\end{equation}

It is an important issue to study the well-posedness of solutions to the compressible Navier-Stokes (CNS) system (i.e., equations $\eqref{main0}_3$-$\eqref{main0}_4$ without the source term), and there is a huge amount of literature concerning this. 
For small initial data that the density is far away from the vacuum, Serrin \cite{MR0106646} and Nash \cite{MR0149094} obtained the local existence of classical solutions to the CNS system.  The global existence of unique strong solutions was proved by Matsumura-Nishida \cite{MR0555060,MR0564670,MR0713680} in the energy space 
\begin{equation}\label{s1}
	\left\{
	\begin{aligned}
	&n-\bar{n}\in C\big(0,T;H^3(\mathbb{R}^3)\big)\cap C^1\big(0,T;H^2(\mathbb{R}^3)\big), \\
	&w\in C\big(0,T;H^3(\mathbb{R}^3)\big)\cap C^1\big(0,T;H^1(\mathbb{R}^3)\big),
	\end{aligned}
	\right.
\end{equation}
with $\bar{n}>0$ for any $T\in(0,\infty]$, where the additional assumption of small oscillation is required on the perturbation of initial data near the non-vacuum equilibrium state $(\bar{n},0)$. The global well-posedness of strong solutions to the Cauchy problem of the CNS system in the critical Besov type space was established by Danchin \cite{MR1779621}, Chen-Miao-Zhang \cite{MR2675485}, and so on. 
For general initial data containing a vacuum, Cho-Kim \cite{MR2094425,MR2289539}  obtained the local existence of unique strong solutions for the CNS system in the energy space (homogeneous Sobolev space)
\begin{equation*}
	\left\{
	\begin{aligned}
	&n\in C\big(0,T;H^3(\mathbb{R}^3)\big)\cap C^1\big(0,T;H^2(\mathbb{R}^3)\big), \\
	&w \in C\big(0,T;D^3(\mathbb{R}^3)\big)\cap L^2\big(0,T;D^{4}(\mathbb{R}^3)\big),
	\end{aligned}
	\right.
\end{equation*}
where $D^k(\mathbb{R}^3)=\{f\in L_{\textrm{{loc}}}^1(\mathbb{R}^3):\ \nabla f\in H^{k-1}(\mathbb{R}^3)\,\}$, under some additional compatibility conditions on the initial data:
\begin{equation*}
	\left\{ 
	\begin{aligned}
	&-\nu\Delta w_0-(\nu+\lambda)\nabla\textrm{div}w_0+\nabla P_0=n_0g,\\
	&g\in D^1(\mathbb{R}^3),\quad \sqrt{n_0}g\in L^2(\mathbb{R}^3).
	\end{aligned}
	\right.
\end{equation*}
The global existence of classical solutions with small energy but large oscillations to the Cauchy problem was established by Huang-Li-Xin \cite{MR2877344} and Wen-Zhu \cite{MR3597161}.

Comparing \cite{MR0555060,MR0564670,MR0713680} with \cite{MR2094425,MR2289539, MR2877344,MR3597161}, it is nature to ask the question whether the CNS system is well-posed in the inhomogeneous Sobolev space when initial data contains a vacuum, in other words, is similar result valid in the energy space \eqref{s1} when $\bar{n}=0$? Generally, the answer is negative. Indeed, Xin~\cite{MR1488513} first proved that it is impossible to obtain the global existence in the inhomogeneous Sobolev space to the Cauchy problem of the CNS system with compactly supported density. Furthermore, Li-Wang-Xin \cite{MR3925527} proved that the classical solutions do not exist in the inhomogeneous Sobolev space for arbitrary small time if the density has compact support satisfying some nature assumptions. One can infer that the velocity blows up in $L^2$ sense for any small time. These results together show that, in the presence of vacuum, the homogeneous Sobolev space is crucial to study the well-posedness of the Cauchy problem for the CNS system. 
Very recently, Li-Xin \cite{MR4039142,MR4491875,MR4646968} obtained the local and global existence of entropy-bounded solutions to the Cauchy problem of the full CNS system, which indicates that the regularity of the fluid velocity can be propagated in the inhomogeneous space, if the initial density decays to vacuum at the far field with a slow decay rate (no faster than $\mathcal{O}(|x|^{-2})$). They also showed 
that the specific entropy becomes not uniformly bounded immediately after the initial time, as long as the initial density decays to
vacuum at the far field with a fast decay rate (no slower than $\mathcal{O}(|x|^{-4})$) in \cite{Li-Xin 2023}. 
In addition, with a suitable decaying rate of the initial density across the vacuum boundary, the immediate blowup of the entropy-bounded classical solutions to the free boundary problem for the full CNS system was investigated by Liu-Yuan \cite{MR4586591}. 
Tang \cite{MR4409181} considered the CNS-Korteweg system and proved some non-existence results in the inhomogeneous Sobolev space.

The investigation of two-phase flows has been the focus of much attention in the last decade, with numerous significant progress on global weak and strong solutions of the related models.  More precisely, for the generic viscous compressible two-fluid model
\begin{equation}\label{generic tfm}
	\left\{
	\begin{aligned}
	&\alpha^{+}+\alpha^{-}=1,\\
	& \partial_{t}(\alpha^{\pm}n^{\pm})+\text{div} (\alpha^{\pm}n^{\pm} w^{\pm})=0,\\
	& \partial_{t}(\alpha^{\pm}n^{\pm} w^{\pm})+\text{div} (\alpha^{\pm}n^{\pm} w^{\pm}\otimes w^{\pm})+\alpha^{\pm}\nabla P^{\pm}(n^{\pm})=\text{div}(\alpha^{\pm}\tau^{\pm}),
	\end{aligned}
	\right.
\end{equation}
the global weak solution was obtained by Bresch et al. \cite{MR2609956} in the periodic domain and by Bresch-Huang-Li \cite{MR2885607} in one dimensional case, provided that $P^{+}(n^{+})=P^{-}(n^{-})$. When the two pressure functions are unequal, Evje-Wen-Zhu \cite{MR3509002} established the global exsitence and optimal time decay rates of strong solutions near the constant equilibrium state. 
As for a simplified version of \eqref{generic tfm}, the so-called drift-flux model as
\begin{equation}\label{drift-flux}
	\left\{
	\begin{aligned}
	& \partial_{t}\rho+\text{div} (\rho w)=0,\\
	&\partial_{t}n+\text{div} (nw)=0,\\
	&\partial_{t}((\rho+n)w)+\text{div} ( (\rho+n) w\otimes w)+\nabla P(\rho,n)=\text{div}\tau,
	\end{aligned}
	\right.
\end{equation}
Evje-Karlsen \cite{MR2455781} first proved the existence of global weak solutions in a setting where the transition to single-phase flow cannot occur, which implies that the initial vacuum is not allowed. When the initial data close to a constant stable equilibrium, the global well-posedness of strong solutions to the Cauchy problem for \eqref{drift-flux} was established by Guo-Yang-Yao \cite{MR2867815} in Sobolev space and by Hao-Li \cite{MR2982713} in the homogeneous Besov space. 
Besides, Novotn\'{y}-Pokorn\'{y} \cite{MR4062479} released the non-vacuum condition by a severe restriction between the two initial densities that
\begin{equation}\label{ulbound}
0\leq\underline{c}\rho_0\leq n_0\leq\overline{c}\rho_0
\end{equation}
with two positive constants $0<\underline{c}\leq\overline{c}<+\infty$ and obtained the global weak solutions to \eqref{drift-flux} in three dimensions. Under the assumption \eqref{ulbound}, the initial data may vanish but the vacuum of two phase must appear at the same point. 
For the case that one phase may persist as the other one vanishes, Evje-Wen-Zhu \cite{MR3606958} proved the existence of global weak solutions in the energy space
\begin{equation*}
	\left\{
	\begin{aligned}
	&0\leq \rho, n\in L^\infty\big(0,T;L^\infty(\Omega)\big),\\
	&w\in L^2\big(0,T;H^1_0(\Omega)\big),~~\sqrt{\rho+n}w\in L^\infty\big(0,T;L^2(\Omega)\big)
	\end{aligned}
	\right.
\end{equation*}
for any $T\in(0,\infty]$, where $\Omega=(0,1)$. For multi-dimensional case, the global existence of finite energy weak solutions to \eqref{drift-flux} was also obtained in \cite{MR3944205,MR3925535,MR4535977} with different pressure functions $P(\rho,n)$. 
The interested reader can refer to the review papers \cite{MR3830751,MR4614648} for the vicous two-fluid equations, while for the fluid-particles models, one can refer to \cite{MR2765745,MR4284416,MR4213662,MR4367916,MR2415460} and the references therein.
	
Nevertheless, there are few results about the well-posedness for the (pressureless) Euler-NS system (i.e., system \eqref{main0} with (without) $\nabla \rho$ in $\eqref{main0}_2$). The global existence of unique strong solutions to the Euler-NS system was established by Choi \cite{MR3546341} in the periodic domain and the whole space, under the assumption that the initial data is a small perturbation of the equilibrium state, in the energy space
\begin{equation*}
	\left\{
	\begin{aligned}
	&\rho-\bar{\rho}, n-\bar{n}\in C\big(0,T;H^s(\Omega)\big)\cap C^1\big(0,T;H^{s-1}(\Omega)\big), \\
	&u\in C\big(0,T;H^s(\Omega)\big)\cap C^1\big(0,T;H^{s-1}(\Omega)\big),\\
	&w\in C\big(0,T;H^s(\Omega)\big)\cap L^2\big(0,T;H^{s+1}(\Omega)\big),\ s>5/2
	\end{aligned}
	\right.
\end{equation*}
with constants $\bar{\rho}, \bar{n}>0$ for any $T\in(0,\infty]$, where $\Omega=\mathbb{T}^3\ \text{or}\ \mathbb{R}^3$. See also similar results \cite{MR4344262,MR4448314} in a bounded domain. Such results can be seen as an extension of the works of Matsumura-Nishida \cite{MR0555060,MR0564670,MR0713680} from the CNS system to the two-phase flows. 
The exponential stability with time in the periodic case was also proved in \cite{MR3546341}. Further, with some additional conditions on the initial data, Tang-Zhang \cite{MR4262061}, Wu-Zhang-Zou \cite{MR4175837}, and Zhang et al. \cite{MR4313091} obtained the optimal algebraic decay rates in time of the global solutions in the whole space with Sobolev regularity. Li-Shou \cite{MR4596744} investigated the local and global well-posedness of strong solutions to the Euler-NS system in a critical homogeneous Besov space and proved the optimal time decay rate, which reflects the influence of the relaxation drag force and the viscosity dissipation on the regularity of the solution. For the pressureless Euler-NS system \eqref{main0}, Choi-Kwon \cite{MR3487272} proved the local and global existence of classical solutions and established the exponential time stability in the three-dimensional spatial periodic space. The finite-time blow-up phenomena of classical solutions in multi-dimensional space were studied by Choi \cite{MR3723164}, and the result shows that any classical solutions in finite energy space cannot exist globally in time, provided some prescribed assumptions on the initial data and viscosity coefficients. 
	
In the present paper, we are going to investigate whether classical solutions of the Cauchy problem for the pressureless Euler-NS system exist in the inhomogeneous Sobolev space. The analysis of the existence of solutions to a two-phase flow is complicated by the fact that, due to the interaction between the two fluids, the system has quite different structure and properties for different relations between the densities of the two fluids. For some typical cases, we apply the maximum principle for a degenerate parabolic operator as well as for an elliptic type operator from the momentum equation in different cases depending on the sizes of the compact supports of the densities, and we prove that, with suitable assumptions on the initial data near the vacuum, the classical solution does not exist for any short time. 
	
The rest parts of this paper are organized as follows. In Section \ref{section2}, we present our main non-existence results Theorem \ref{thm1} and \ref{thm2} on the classical solutions to the Cauchy problem \eqref{main0}--\eqref{initial}. In Section \ref{section3}, we consider the problem in the Lagrangian coordinates and rewrite it into an initial boundary value problem. The proofs of Theorem \ref{thm1} and \ref{thm2} are given in Section \ref{section4} and Section \ref{section5} respectively.

\section{main results}\label{section2}

\subsection{Results}
Throughout this paper, we only consider the one-dimensional case, i.e., $d=1$. Let $C^1(0,T;H^m(\mathbb{R}))$ and $C_1^2(\mathbb{R}\times [0,T])$ be the standard Sobolev space and Schauder space (first (second) order continuously differentiable on time (space)), respectively. We assume that the initial densities $\rho_0$ and $n_0$ are compactly supported on open bounded sets $\Omega_1\subseteq\mathbb{R}$ and $\Omega_2\subseteq\mathbb{R}$ respectively. To be more specific, it holds
\begin{equation}\label{support}
	\begin{aligned}
	&{\rm{supp}}_x\rho_0=\bar{\Omega}_1,&&\rho_0(x)>0,&&x\in\Omega_1,\\
	&{\rm{supp}}_xn_0=\bar{\Omega}_2,&&n_0(x)>0,&&x\in\Omega_2.
	\end{aligned}
\end{equation}
For convenience, we further take
$$\Omega:=\Omega_1\cup\Omega_2=(0,1),$$
and denote
$$\mu(n):=2\nu(n)+\lambda(n).$$
Besides, the viscosity coefficient $\mu(n)$ is supposed to be $C^1$ such that
\begin{equation}\label{viscoe}
	\mu(n)\in [B^{-1},B]\quad\mathrm{and}\quad\mu'(n)\leq B
\end{equation}
with a positive constant $B>0$, and we assume
\begin{equation}\label{inda0}
	\begin{aligned}
	&\rho_0+n_0+|(n_0)_x|<m&&\text{in}~\Omega\\ 
	&|(u_0,(u_0)_x,(u_0)_{xx}, w_0,(w_0)_x,(w_0)_{xx})|< M&&\text{in}~\Omega
	\end{aligned}
\end{equation}
for some constants $m>0$ and $M>0$.

The main results can be stated as follows:
\begin{theorem}\label{thm1} 
Assume that the initial data $(\rho_0,u_0,n_0,w_0)$ satisfies \eqref{support} and \eqref{inda0}. When $\Omega_1\subsetneqq\Omega_2$, if there exist some constants $p_0>0$, $d_0\in(0,1)$ such that
\begin{equation}\label{inda1}
	\begin{aligned}
	\frac{(n_0)_x}{n_0}\geq p_0,~~w_0(d_0)<0,~~w_0\leq 0&&\mathrm{in}~(0,d_0]
	\end{aligned}
\end{equation}
or
\begin{equation}\label{inda2}
	\begin{aligned}
	\frac{(n_0)_x}{n_0}\leq -p_0,~~w_0(1-d_0)>0,~~w_0\geq 0&&\mathrm{in}~[1-d_0,1).
	\end{aligned}
\end{equation}
When $\Omega_1=\Omega_2$, except \eqref{inda1}--\eqref{inda2}, if the initial data additionally satisfies
\begin{equation}\label{inda3} 
	\begin{aligned}
	\frac{n_0^\gamma}{\rho_0}\geq\frac{2^{\gamma+4}M}{\gamma p_0}&&\mathrm{in}~(0,d_0)\cup (1-d_0,1).
	\end{aligned}
\end{equation}
Then \eqref{main0}--\eqref{initial} does not admit any solution $(\rho,u,n,w)\in C^1(0,T;H^m(\mathbb{R}))$ with $m>2$ for any positive time $T$.
\end{theorem}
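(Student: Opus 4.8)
My approach is by contradiction. Suppose that for some $T>0$ a solution $(\rho,u,n,w)\in C^1(0,T;H^m(\mathbb{R}))$ with $m>2$ exists; it suffices to reach a contradiction after restricting to an arbitrarily short time interval, since a solution on $[0,T]$ restricts to one on every $[0,T']$. Because $d=1$ and $m>2$, the embedding $H^m(\mathbb{R})\hookrightarrow C^1(\mathbb{R})$ makes $w$ and $w_x$ continuous in $x$ up to the vacuum interfaces and makes $\rho,n,n_x,u,w$ bounded; the densities, being initially compactly supported and transported by bounded velocities, keep compact support, and together with \eqref{inda0} and \eqref{viscoe} a short-time Gr\"onwall estimate keeps all these quantities comparable to the initial bounds $m,M$ on the (shrunken) interval. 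I work near the left interface under \eqref{inda1}; the case \eqref{inda2} near $x=1$ is symmetric.

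The first step records the rigidity forced by the vacuum. On the unbounded region lying to the left of both $\supp\rho(\cdot,t)$ and $\supp n(\cdot,t)$, equation $\eqref{main0}_4$ collapses to $\mu(0)w_{xx}=0$, so $w(\cdot,t)$ is affine there; as $w(\cdot,t)\in H^m(\mathbb{R})\subset L^2(\mathbb{R})$ it must decay at $-\infty$, forcing $w\equiv 0$ throughout this common vacuum. By the $C^1$ continuity of $w$ this yields the two identities
\begin{equation*}
w(\ell(t),t)=0,\qquad w_x(\ell(t),t)=0,
\end{equation*}
at the left edge $\ell(t)$ of $\supp\rho(\cdot,t)\cup\supp n(\cdot,t)$; in particular the edge moves with velocity $w(\ell(t),t)=0$ and therefore stays at $x=0$. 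The Lagrangian reformulation of Section \ref{section3} is what makes this interface fixed and exhibits the degenerate structure below cleanly.

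On the fluid side $\{n>0\}$ I use the momentum equation. Eliminating $\partial_t(nw)+\partial_x(nw^2)$ through $\eqref{main0}_3$, the velocity solves
\begin{equation*}
n\,w_t-\mu(n)w_{xx}+\big(nw-\mu'(n)n_x\big)w_x+\rho\,w=\rho\,u-P_x,
\end{equation*}
which is uniformly elliptic in $x$ since $\mu(n)\ge B^{-1}$ by \eqref{viscoe}, only the coefficient $n$ of $w_t$ degenerating at the interface, and which has the good sign $\rho\ge 0$ in its zeroth-order term. By \eqref{inda1} the pressure gradient is favorably signed and quantitatively large, $P_x=\gamma n^{\gamma-1}n_x\ge \gamma p_0\,n^\gamma$, so the right-hand side obeys $\rho u-P_x\le M\rho-\gamma p_0 n^\gamma$. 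When the supports are strictly nested one may situate the analysis in a layer adjacent to the interface where the Euler density vanishes, so the drag drops out and the right-hand side is $\le 0$; when $\Omega_1=\Omega_2$ the same sign is instead secured by \eqref{inda3}, whose generous constant $2^{\gamma+4}$ leaves room to keep $n^\gamma/\rho$ above the required threshold after the short-time evolution. With $w_0\le 0$ and $w_0(d_0)<0$, comparison against a negative barrier in a parabolic neighborhood of the interface gives $w<0$ in the interior, and the Hopf boundary-point lemma for this operator then forces the strict one-sided inequality $w_x(0^+,t)<0$. This contradicts $w_x(\ell(t),t)=0$ from the vacuum side, completing the argument. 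In the configurations where a pure Euler layer $\{n=0<\rho\}$ separates the two interfaces, equation $\eqref{main0}_4$ reduces there to the elliptic relation $\mu(0)w_{xx}=-\rho(u-w)$, and the corresponding elliptic maximum principle is used to transfer the sign information across the layer.

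I expect the main obstacle to be exactly the maximum principle and the Hopf lemma for the time-degenerate operator above: because the coefficient $n$ of $w_t$ vanishes at the interface, neither the strong maximum principle nor the classical boundary-point lemma applies off the shelf, and one must construct an explicit barrier that exploits the uniform ellipticity in $x$, the sign $\rho\ge 0$, and the lower bound $P_x\ge\gamma p_0 n^\gamma$, all while absorbing the drag coupling $\rho(u-w)$ and controlling the a priori unknown motion of the free interface over the short time interval.
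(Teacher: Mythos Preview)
Your proposal is correct and follows essentially the same route as the paper: pass to the Lagrangian IBVP of Section~\ref{section3} so that the interface is fixed and $v_2=(v_2)_x=0$ there, verify that the degenerate parabolic operator $\mathcal{L}$ applied to $v_2$ has the right sign on $(0,d_0)$ via the pressure gradient (with \eqref{inda3} absorbing the drag when $\Omega_1=\Omega_2$), and then contradict the interface condition $(v_2)_x=0$ by a Hopf boundary-point lemma. The obstacle you single out---the strong maximum principle and Hopf lemma for the operator whose $\partial_t$-coefficient degenerates at the boundary---is precisely what the paper establishes by explicit barrier constructions in Lemmas~\ref{dpweak}--\ref{dpsm}; note only that your closing remark about a pure Euler layer $\{n=0<\rho\}$ does not arise under the hypotheses of Theorem~\ref{thm1} (where $\Omega_1\subseteq\Omega_2$) and belongs instead to Theorem~\ref{thm2}.
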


\begin{theorem}\label{thm2} 
Assume that $\Omega_2\subsetneqq\Omega_1$ and the initial data $(\rho_0,u_0,n_0,w_0)$ satisfies \eqref{support} and \eqref{inda0}. If  there exist a constant $d_0\in(0,1)$ such that
\begin{equation}\label{inda5}
	\begin{aligned}
	w_0(d_0)<0,~~~~u_0-w_0<0\quad &&\mathrm{in}~(0,d_0],
	\end{aligned}
\end{equation}
or
\begin{equation}\label{inda6}
	\begin{aligned}
	w_0(1-d_0)>0,~~~~u_0-w_0>0\quad &&\mathrm{in}~[1-d_0,1).
	\end{aligned}
\end{equation}
Then \eqref{main0}--\eqref{initial} does not admit any solution $(\rho,u,n,w)\in C^1(0,T;H^m(\mathbb{R}))$ with $m>2$ for any positive time $T$.
\end{theorem}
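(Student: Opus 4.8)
The plan is to argue by contradiction: assuming a solution $(\rho,u,n,w)\in C^1(0,T;H^m(\mathbb{R}))$ with $m>2$ exists, I would extract an inconsistency already encoded in the momentum balance $\eqref{main0}_4$ once it is restricted to the spatial region where the Navier--Stokes density $n$ vanishes. Since $\Omega_2\subsetneqq\Omega_1$ and $\Omega=\Omega_1\cup\Omega_2=(0,1)$, the density $\rho_0$ is strictly positive on $(0,1)$ while $n_0$ vanishes on a neighborhood of at least one endpoint of $(0,1)$; the two alternatives \eqref{inda5} and \eqref{inda6} correspond to the left and right endpoints. On such a vacuum region of $n$ the pressure $P=n^\gamma$ and the convective and inertial terms of $\eqref{main0}_4$ drop out, and (recalling that in one dimension $\tau=\mu(n)w_x$) what remains is the elliptic relation $\mu(0)\,w_{xx}=-\rho(u-w)$ in the spatial variable, with $\mu(0)>0$ by \eqref{viscoe} and sign dictated by the prescribed sign of the relative velocity $u-w$. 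The maximum-principle (convexity) structure of this relation, together with the decay forced by $w\in H^m$, is what produces the contradiction. Because every surviving term is controlled by the initial data through the equation evaluated at $t=0$, I expect the obstruction to be essentially instantaneous, so I would carry out the estimate at $t=0$; the Lagrangian reformulation of Section~\ref{section3} then serves only to make the moving-interface bookkeeping transparent should a time-dependent version be preferred.

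Concretely, under \eqref{inda5} I would proceed in four steps. First, observe that $\Omega_2\subsetneqq\Omega_1=(0,1)$ together with the placement of $d_0$ forces $n_0\equiv0$ on a full neighborhood of $[0,d_0]$ (the relative-velocity hypothesis is posed precisely on such a vacuum region of $n$), while $\rho_0>0$ throughout $(0,1)$. Second, for $x\le0$ both densities and the pressure vanish, and evaluating $\eqref{main0}_4$ at $t=0$ kills the time-derivative term as well, since $n_0=0$ on a neighborhood forces $n_t|_{t=0}=-(n_0w_0)_x=0$, leaving $\mu(0)(w_0)_{xx}=0$ on $(-\infty,0)$; as $w_0\in H^m$ decays at $-\infty$, the only bounded linear solution is $w_0\equiv0$ on $(-\infty,0]$, whence $w_0(0)=(w_0)_x(0)=0$ by the $C^1$ regularity coming from $m>2$. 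Third, the same reduction on $(0,d_0)$, where now $\rho_0>0$, gives
\begin{equation*}
\mu(0)\,(w_0)_{xx}=\rho_0\,(w_0-u_0)>0\qquad\text{on }(0,d_0),
\end{equation*}
using $u_0-w_0<0$; thus $w_0$ is strictly convex there. Fourth, convexity together with $(w_0)_x(0)=0$ forces $(w_0)_x\ge(w_0)_x(0)=0$, hence $w_0$ is nondecreasing and $w_0\ge w_0(0)=0$ on $(0,d_0]$, contradicting $w_0(d_0)<0$.

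The alternative hypothesis \eqref{inda6} is handled by the mirror-image argument near $x=1$: the sign $u_0-w_0>0$ now yields $\mu(0)(w_0)_{xx}=\rho_0(w_0-u_0)<0$, so $w_0$ is concave on $(1-d_0,1)$, while $w_0\equiv0$ on $[1,\infty)$ gives $(w_0)_x(1)=0$; concavity then forces $w_0$ to be nondecreasing on $(1-d_0,1)$ and hence $w_0\le w_0(1)=0$, contradicting $w_0(1-d_0)>0$. In either case the assumed classical solution cannot exist for any $T>0$.

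The step I expect to require the most care is the rigorous justification that $w_0$ vanishes identically to the left of $\mathrm{supp}\,\rho_0$ (respectively to the right), since this hinges on the vanishing of the time-derivative term $\partial_t(nw)$ at $t=0$ in the vacuum of $n$ and on upgrading the $H^m$-decay of $w_0$ to the pointwise identity $w_0\equiv0$; the $C^1$ matching of $w_0$ and $(w_0)_x$ across the interface $x=0$ is what transfers the homogeneous slope data into the convexity argument. A secondary subtlety is that the coefficient $\rho_0$ degenerates at the free boundary $x=0$, so the convexity is only non-strict there; this is harmless for the monotonicity conclusion but must be stated carefully. If instead one wishes to run the comparison for $t>0$ rather than instantaneously, the genuine difficulty becomes tracking the two vacuum interfaces of $\rho$ and $n$, which move along the distinct characteristic fields $u$ and $w$ and therefore need not remain nested; the maximum principle would then have to be applied on a time-varying domain, which is exactly the scenario the Lagrangian formulation is designed to control.
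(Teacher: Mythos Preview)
Your argument is correct and takes a more elementary route than the paper. Both proofs exploit the same observation: on the interval where $n_0=0$ but $\rho_0>0$, the Navier--Stokes momentum balance degenerates to a purely spatial second-order relation for $w$, with sign forced by the drag term $\rho(u-w)$. The paper passes to Lagrangian coordinates and, for each fixed $t\in(0,T]$, works with the variable-coefficient elliptic operator $L_t v=-a(x,t)v_{xx}+b(x,t)v_x$, developing from scratch a weak maximum principle, a Hopf lemma, and a strong maximum principle for $L_t$; the contradiction is that Hopf's lemma yields $\partial v_2(0,t)/\partial\vec{n}>0$ against the derived boundary condition $(v_2)_x=0$ on $\partial\Omega$. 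You instead evaluate the Eulerian equation at $t=0$, where the Lagrangian map is the identity and the operator collapses to the constant-coefficient $-\mu(0)\partial_{xx}$; plain one-dimensional convexity then replaces the entire maximum-principle apparatus, and the contradiction is directly with the sign hypothesis $w_0(d_0)<0$. Your route is shorter and sidesteps the Lagrangian bookkeeping altogether; the paper's route, while heavier, keeps the argument uniform with its proof of Theorem~\ref{thm1} (there $n_0>0$ on the relevant interval, the term $n_0(v_2)_t$ survives, and the $t=0$ shortcut is unavailable, so a genuinely parabolic maximum principle is required) and does not rely on the PDE being satisfied at the initial instant.
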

	
\subsection{Remarks}	

The following remarks are helpful for understanding Theorem \ref{thm1} and \ref{thm2}.  
	
\begin{remark} 
(I)	Set
\begin{equation}\label{ex1}
	n_0(x)=
	\left\{ 
	\begin{aligned}
	&x^k(1-x)^k&& \ \mathrm{for} \ x\in [0,1],\\
	&0&&\ \mathrm{for}\ x\in \mathbb{R}\setminus [0,1],
	\end{aligned}
	\right.
\end{equation}
and
\begin{equation}\label{ex2}
	w_0(x)=
	\left\{ 
	\begin{aligned}
	&-x^l&& \ \mathrm{for}\ x\in [0,1/4],\\
	&\mathrm{smooth\ connection}&& \ \mathrm{for}\ x\in (1/4,3/4),\\
	&(1-x)^l&& \ \mathrm{for}\ x\in  [3/4,1],\\
	&0&& \ \mathrm{for}\ x\in \mathbb{R}\setminus [0,1].
	\end{aligned}
	\right.
\end{equation}
Then $(n_0,w_0)$ in \eqref{ex1} and \eqref{ex2} satisfies both \eqref{inda1} and \eqref{inda2} for any  $k,l\in \mathbb{N}_+$.
		
	(II)	Let 
		\begin{equation}\label{ex3}
			\rho_0(x)=
			\left\{ 
			\begin{aligned}
				&x^{\tilde{k}}(1-x)^{\tilde{k}}&& \ \mathrm{for} \ x\in [0,1],\\
				&0&&\ \mathrm{for}\ x\in \mathbb{R}\setminus [0,1].
			\end{aligned}
			\right.
		\end{equation}
		If we choose $\tilde{k}\in\mathbb{N}_+$ such that $k\gamma<\tilde{k}$, then $(\rho_0, n_0)$ in \eqref{ex1} and \eqref{ex3} satisfies \eqref{inda3}.

	(III)	 Let $u_0=\theta w_0$ with $\theta>1$. Then $(u_0,w_0)$ in \eqref{ex2} satisfies both \eqref{inda5} and \eqref{inda6}.

	\end{remark}

	\begin{remark}
		
		We mention that one can obtain similar non-existence results as in Theorem \ref{thm1} or \ref{thm2} for the Euler-NS system $($adding $\nabla \rho$ on the LHS of $\eqref{main0}_2)$ and the NS-NS system, since the proof entirely relies on the operator of the Navier-Stokes equations after Lagrange transformation.
		
	\end{remark}

     \begin{remark}
		
	The argument in this paper can also be applied to the spherically symmetric  pressureless Euler-NS system in higher dimension since the proof mainly relies on the maximum principle which is carried out on the  (single) momentum equation from the Navier-Stokes equations.   
		
	\end{remark}
	
	\section{Lagrangian formulation}\label{section3}
	Let $\eta_1$ and $\eta_2$ be the position of the fluid particle $x$ at time $t$ defined by
	\begin{equation}\label{LF-1}
		\begin{aligned}
			(\eta_1)_t(x,t)=u(\eta_1(x,t),t),\quad
			\eta_1(x,0)=x,
		\end{aligned}
	\end{equation}
	and
	\begin{equation}\label{LF-2}
		\begin{aligned}
			(\eta_2)_t(x,t)=w(\eta_2(x,t),t),\quad
			\eta_2(x,0)=x.
		\end{aligned}
	\end{equation}
	We denote by $\varrho_1, \varrho_2$ and $v_1, v_2$ the Lagrangian densities and velocities respectively, which satisfy
	\begin{equation}\label{LF-3}
		\begin{aligned}
			&\varrho_1(x,t)=\rho(\eta_1(x,t),t),&&v_1(x,t)=u(\eta_1(x,t),t),\\
			&\varrho_2(x,t)=n(\eta_2(x,t),t),&&v_2(x,t)=w(\eta_2(x,t),t).
		\end{aligned}
	\end{equation}
	In view of \eqref{LF-1}--\eqref{LF-3}, the Cauchy problem \eqref{main0}--\eqref{initial} is transformed into the following problem on $\Omega$: 
	\begin{equation}\label{main-LF-1}
		\left\{
		\begin{aligned}
			&(\varrho_1)_t+\varrho_1\frac{(v_1)_x}{(\eta_1)_x}=0 && \ \text{in} \ \Omega\times(0,T],\\
			&\varrho_1(v_1)_t=-\varrho_1(v_1-v_2) && \ \text{in} \ \Omega\times(0,T],\\
			&(\varrho_2)_t+\varrho_2\frac{(v_2)_x}{(\eta_2)_x}=0 && \ \text{in} \ \Omega\times(0,T],\\
			&\varrho_2(v_2)_t+\frac{(\varrho_2^\gamma)_x}{(\eta_2)_x}=\frac{1}{(\eta_2)_x}\left[\mu(\varrho_2)\frac{(v_2)_x}{(\eta_2)_x}\right]_x\\
			&\qquad\qquad\qquad\qquad+\varrho_1(v_1-v_2)&& \ \text{in} \ \Omega\times(0,T],\\
			&(\varrho_1, v_1,\varrho_2, v_2)(x,0)=(\rho_0, u_0, n_0, w_0)(x).
		\end{aligned}
		\right.
	\end{equation}
	
	Notice that $\varrho_1$ and $\varrho_2$ can be solved by $\eqref{main-LF-1}_1$ and $\eqref{main-LF-1}_3$ respectively as
	\begin{equation}\label{LF-4}
		\varrho_1=\frac{\rho_0}{(\eta_1)_x}\quad \mathrm{and}\quad \varrho_2=\frac{n_0}{(\eta_2)_x}. 
	\end{equation}
	Inserting \eqref{LF-4} into \eqref{main-LF-1}, we have
	\begin{equation}\label{main-LF-2} 
		\left\{
		\begin{aligned}
			&\rho_0(v_1)_t=-\rho_0(v_1-v_2)&& \ \text{in} \ \Omega\times(0,T],\\
			&n_0(v_2)_t+\left(\frac{n_0^\gamma}{(\eta_2)_x^\gamma}\right)_x=\left[\mu\left(\frac{n_0}{(\eta_2)_x}\right)\frac{(v_2)_x}{(\eta_2)_x}\right]_x\\
			&\qquad\qquad\qquad\qquad\qquad+\rho_0\frac{(\eta_2)_x}{(\eta_1)_x}(v_1-v_2)&& \ \text{in} \ \Omega\times(0,T],\\
			&(v_1, v_2)(x,0)=(u_0,w_0)(x).
		\end{aligned}
		\right.
	\end{equation}

	Let $T\leq \min\{1, \frac{1}{2M}, \frac{p_0}{4M}\}$ be a sufficiently small number\footnote{ $T$ can be taken sufficiently small since we will show Theorems \ref{thm1}--\ref{thm2} by contradiction.}. Assume that $(v_1,v_2)\in C^1([0,T];H^m(\mathbb{R}))$ with $m>2$ is the solution to \eqref{main-LF-2} with the initial data  satisfying \eqref{inda0}.
	Then it holds that
	\begin{equation}\label{LF-6}
		|(v_1,(v_1)_x,(v_1)_{xx},v_2,(v_2)_x,(v_2)_{xx})|\leq M\quad\text{in}~\Omega\times(0,T],
	\end{equation}
	and
	\begin{equation}\label{LF-7}
		\begin{aligned}
			&1/2\leq (\eta_1)_x,\ (\eta_2)_x\leq 2\quad &&\text{in}~\Omega\times(0,T],\\
			&|(\eta_1)_{xx}|,\ |(\eta_2)_{xx}|\leq \min\{1/2,p_0/4\}\quad &&\text{in}~\Omega\times(0,T].
		\end{aligned}
	\end{equation}
	Indeed, \eqref{LF-6} follows from the continuity of the solution $(v_1,v_2)$, and \eqref{LF-7} follows from \eqref{LF-1}--\eqref{LF-3} that
	\begin{equation*}
		\begin{aligned}
			&|(\eta_2)_x-1|\leq \int_0^t |(v_2)_x| ds\leq MT\leq 1/2,\\
			&|(\eta_2)_{xx}|\leq \int_0^t |(v_2)_{xx}| ds\leq MT\leq \min\{1/2,p_0/4\},
		\end{aligned}
	\end{equation*}
	and  other similar estimates.

	Due to \eqref{support}, for $x\in\mathbb{R}\setminus\bar{\Omega}$, one sees that $\rho_0(x)=n_0(x)=0$. Then it is derived from  $\eqref{main-LF-2}_2$ that
	\begin{equation*}
		\left[\mu\left(\frac{n_0}{(\eta_2)_x}\right)\frac{(v_2)_x}{(\eta_2)_x}\right]_x(x,t)=0\quad \mathrm{for}\ (x,t)\in\mathbb{R}\setminus\bar{\Omega}\times(0,T],
	\end{equation*}
	which together with \eqref{viscoe} and \eqref{LF-7} yields
	\begin{equation}\label{LF-8}
		(v_2)_x(x,t)=0\quad \mathrm{for}\ (x,t)\in\mathbb{R}\setminus\bar{\Omega}\times(0,T].
	\end{equation}
	Recalling $v_2\in C^1([0,T];H^m(\mathbb{R}))$ with $m>2$ and using \eqref{LF-8}, one obtains
	\begin{equation}\label{LF-9}
		v_2(x,t)=0\quad \mathrm{for}\ (x,t)\in\mathbb{R}\setminus\bar{\Omega}\times(0,T].
	\end{equation}
	
	With the new constraints \eqref{LF-8}--\eqref{LF-9}, to show that the Cauchy problem \eqref{main0}--\eqref{initial} has no solution in $ C^1([0,T];H^m(\mathbb{R}))$ with $m>2$, it suffices to show that the following initial boundary value problem:
	\begin{equation}\label{main-LF-3}
		\left\{
		\begin{aligned}
			&\rho_0(v_1)_t=-\rho_0(v_1-v_2)&& \ \text{in} \ \Omega\times(0,T],\\
			&n_0(v_2)_t+\left(\frac{n_0^\gamma}{(\eta_2)_x^\gamma}\right)_x=\left[\mu\left(\frac{n_0}{(\eta_2)_x}\right)\frac{(v_2)_x}{(\eta_2)_x}\right]_x\\
			&\qquad\qquad\qquad\qquad\qquad+\rho_0\frac{(\eta_2)_x}{(\eta_1)_x}(v_1-v_2)&& \ \text{in} \ \Omega\times(0,T],\\
			&v_2=(v_2)_x=0&&\ \text{on}~\partial\Omega\times(0,T],\\
			&(v_1, v_2)(x,0)=(u_0,w_0)(x)
		\end{aligned}
		\right.
	\end{equation}
	has no solution in $C_1^2(\Omega\times[0,T])$\footnote{This can be checked by contradiction using Sobolev embeddings.}.

	\section{Proof of Theorem \ref{thm1}}\label{section4}
	Define the linear degenerate parabolic operator
	\begin{equation}\label{Proof-1}
		\mathcal{L}v:=n_0v_t-a(x,t)v_{xx}-b(x,t)v_x\quad \text{in} \ \Omega\times(0,T]
	\end{equation}
	with
	\begin{equation}\label{Proof-TH1-2}
		a(x,t)=\mu\left(\frac{n_0}{(\eta_2)_x}\right)\frac{1}{(\eta_2)_x}\quad\mathrm{and}\quad b(x,t)=\left[\mu\left(\frac{n_0}{(\eta_2)_x}\right)\frac{1}{(\eta_2)_x}\right]_x.
	\end{equation}

	There are two cases to consider. 
	
	\underline{\emph{Case 1: $\Omega_1\subsetneqq\Omega_2$}}. In this case, 
	there exists a small number $d_0\in (0,1)$  such that $\rho_0=0$ but $n_0\neq 0$ for $x\in (0,d_0)\cup(1-d_0,1)$ and thus, by $\eqref{main-LF-3}_2$, it yields
	\begin{equation*}
		\begin{aligned}
			\mathcal{L}v_2=-\left(\frac{n_0^\gamma}{(\eta_2)_x^\gamma}\right)_x\quad \mathrm{for}\ (x,t)\in (0,d_0)\cup(1-d_0,1)\times(0,T].
		\end{aligned}
	\end{equation*}
	We regard the RHS (pressure) of the above equation as a forcing term. The key observation is that this forcing term has a fixed sign for $(x,t)\in (0,d_0)\times(0,T]$ that
	\begin{equation}\label{Proof-TH1-3}
		\begin{aligned}
			-\left(\frac{n_0^\gamma}{(\eta_2)_x^\gamma}\right)_x
			&=-\frac{\gamma n_0^\gamma}{(\eta_2)_x^\gamma}\left(\frac{(n_0)_x}{n_0}-\frac{(\eta_2)_{xx}}{(\eta_2)_x}\right)\\
			&\leq-\frac{\gamma n_0^\gamma}{2^{\gamma}}(p_0-p_0/2)\leq 0,
		\end{aligned}
	\end{equation}
	where one has used \eqref{inda1} and \eqref{LF-7}.
	Then it follows that
	\begin{equation}\label{Proof-TH1-5}
		\mathcal{L}v_2\leq 0\quad\text{in}~(0,d_0)\times(0,T].
	\end{equation}
	Similarly, one can show that
	\begin{equation}\label{Proof-TH1-6}
		\mathcal{L}v_2\geq 0\quad\text{in}~(1-d_0,1)\times(0,T]. 
	\end{equation}

	\underline{\emph{Case 2: $\Omega_1=\Omega_2$}}. In this case, 
	there exists a small number $d_0\in (0,1)$ such that
	\begin{equation}\label{Proof-TH1-6-add}
		\begin{aligned}
			&\mathcal{L}v_2=-\left(\frac{n_0^\gamma}{(\eta_2)_x^\gamma}\right)_x+\rho_0\frac{(\eta_2)_x}{(\eta_1)_x}(v_1-v_2)\\
			&\qquad\quad\mathrm{for}\ (x,t)\in (0,d_0)\cup(1-d_0,1)\times(0,T].
		\end{aligned}
	\end{equation}
	For the two forcing terms on the RHS of the above equation, we regard the pressure as a decisive term
	and obtain for $(x,t)\in (0,d_0)\times(0,T]$ that
	\begin{equation*}
		\begin{aligned}
			&-\left(\frac{n_0^\gamma}{(\eta_2)_x^\gamma}\right)_x+\rho_0\frac{(\eta_2)_x}{(\eta_1)_x}(v_1-v_2)\\
			=&-\rho_0\left[\frac{\gamma n_0^\gamma}{\rho_0(\eta_2)_x^\gamma}\left(\frac{(n_0)_x}{n_0}-\frac{(\eta_2)_{xx}}{(\eta_2)_x}\right)-\frac{(\eta_2)_x}{(\eta_1)_x}(v_1-v_2)\right]\\
			\leq&-\rho_0\left[\frac{\gamma n_0^\gamma}{2^{\gamma}\rho_0}(p_0-p_0/2)-8M\right]\leq 0
		\end{aligned}
	\end{equation*}
	where we have applied \eqref{inda3} and \eqref{LF-7}. Hence, \eqref{Proof-TH1-5} still holds, and one can handle similarly to have \eqref{Proof-TH1-6} as well.

	In the following, we mainly discuss the Hopf's lemma and the strong maximum principle for the operator $\mathcal{L}$ satisfying \eqref{Proof-TH1-5}. The other case \eqref{Proof-TH1-6} can be handled similarly. For notational simplicity, let $D_T$ be an open and bounded set in $(0,d_0)\times (0,T]$, and denote by $\partial_pD_T$ the parabolic boundary of $D_T$.

	\subsection{The Hopf's lemma}
	We start with the weak maximum principle.
	\begin{lemma} \label{dpweak}
		Assume that $v\in C_1^2(D_T)\cap C(\bar{D}_T)$.
		If $v$ satisfies
		\begin{equation}\label{Proof-TH1-7}
			\mathcal{L}v\leq 0\quad\mathrm{in}~D_T,
		\end{equation}
		and $v$ attains its maximum at $(x_0,t_0)$, then $(x_0,t_0)\in\partial_p D_T$. 
	\end{lemma}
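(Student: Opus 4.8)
The plan is to prove the statement in its standard weak form, namely that $\max_{\bar{D}_T}v=\max_{\partial_p D_T}v$, so that the maximum value is attained at some point of $\partial_p D_T$ (which is what is needed for Hopf's lemma and the strong maximum principle that follow). The reason I phrase it this way rather than literally is that a non-strict inequality $\mathcal{L}v\le 0$ cannot by itself forbid interior maxima (a constant is a maximizer everywhere); it can only guarantee that the maximal value is also realized on the parabolic boundary. Throughout I will never divide by $n_0$, and will exploit only the sign information $n_0\ge 0$ together with the \emph{strict} positivity of the diffusion coefficient $a$.

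First I would record the first- and second-order conditions at an extremal point. Suppose $v$ attains its maximum over $\bar{D}_T$ at a point $(x_0,t_0)\notin\partial_p D_T$; by the definition of the parabolic boundary there is then a backward half-neighborhood $\{(x,t):\abs{x-x_0}<r,\ t_0-r<t\le t_0\}\subset D_T$. Since $x\mapsto v(x,t_0)$ has an interior maximum at $x_0$, we get $v_x(x_0,t_0)=0$ and $v_{xx}(x_0,t_0)\le 0$; since $t\mapsto v(x_0,t)$ is maximized at the right endpoint $t_0$, the one-sided derivative yields $v_t(x_0,t_0)\ge 0$. Here I use $v\in C_1^2(D_T)$ so that these derivatives exist at $(x_0,t_0)$.

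Next I would dispose of the strict case $\mathcal{L}v<0$. Evaluating \eqref{Proof-1} at $(x_0,t_0)$ and using $n_0(x_0)\ge 0$ with $v_t\ge 0$, the uniform positivity $a(x_0,t_0)>0$ — which follows from \eqref{Proof-TH1-2}, \eqref{viscoe} and \eqref{LF-7}, giving $a\ge B^{-1}/2$ — with $v_{xx}\le 0$, and the vanishing $v_x=0$, I obtain $\mathcal{L}v(x_0,t_0)=n_0 v_t-a v_{xx}-b v_x\ge 0$, contradicting \eqref{Proof-TH1-7} in its strict form. Hence in the strict case no parabolic-interior point can be a maximizer, so the maximum lies on $\partial_p D_T$. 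To remove the strictness I would perturb by $w_\varepsilon:=v-\varepsilon t$. Since $\mathcal{L}(\varepsilon t)=\varepsilon n_0$, we have $\mathcal{L}w_\varepsilon=\mathcal{L}v-\varepsilon n_0\le -\varepsilon n_0<0$ at every point of $D_T$, because each point there has $x>0$ and $n_0>0$ throughout $(0,1)\supset(0,d_0)$. Applying the strict case to $w_\varepsilon$ gives $\max_{\bar{D}_T}w_\varepsilon=\max_{\partial_p D_T}w_\varepsilon$, and from $w_\varepsilon\le v\le w_\varepsilon+\varepsilon T$ I conclude $\max_{\bar{D}_T}v\le \max_{\partial_p D_T}v+\varepsilon T$; letting $\varepsilon\to 0^+$ yields $\max_{\bar{D}_T}v=\max_{\partial_p D_T}v$.

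The main point to emphasize is the degeneracy of $\mathcal{L}$: since $n_0$ multiplies $v_t$ and vanishes at the spatial endpoint $x=0\in\partial\Omega$, one cannot normalize $\mathcal{L}$ to a uniformly parabolic operator on all of $\bar{D}_T$, so the textbook proof that divides out the coefficient of $v_t$ is unavailable. The workaround is exactly the sign bookkeeping above: $n_0\ge 0$ everywhere keeps the term $n_0 v_t\ge 0$ on the correct side of the inequality, the pointwise bound $n_0(x_0)>0$ is guaranteed at any interior maximizer because there $x_0>0$, and the perturbation $-\varepsilon n_0$ needs to be strictly negative only in the open set $D_T$ and not on its closure. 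I do not expect any genuine analytic difficulty beyond this careful handling of the degenerate weight, as the positivity $a\ge B^{-1}/2$ and the continuity of $v$ on the compact set $\bar{D}_T$ supply everything else.
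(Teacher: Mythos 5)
Your proof is correct. Note that the paper gives no proof of Lemma \ref{dpweak} at all --- it defers entirely to \cite{MR3925527} --- so there is nothing to match line by line; your argument (derivative tests at a parabolic-interior maximizer to kill the strict case, then a perturbation to remove strictness) is precisely the standard scheme that the cited reference follows, and it fills the gap the paper leaves open. Two of your choices deserve comment. First, restating the conclusion as $\max_{\bar{D}_T}v=\max_{\partial_p D_T}v$ is the right reading: as you observe, the literal statement fails for constants, and every later application in the paper (Lemma \ref{dpHopf}, Proposition \ref{dpellipsoid}, the proof of Theorem \ref{thm1}) uses only this max-value form. Second, your perturbation $w_\varepsilon=v-\varepsilon t$ gives $\mathcal{L}w_\varepsilon\le-\varepsilon n_0$, so it requires $n_0>0$ on $(0,d_0)$; this does hold in the setting of Theorem \ref{thm1}, since either $\Omega_1\subsetneqq\Omega_2$ or $\Omega_1=\Omega_2$ combined with $\Omega_1\cup\Omega_2=(0,1)$ forces $\Omega_2=(0,1)$, whence $n_0>0$ there by \eqref{support} --- but this justification is essential and worth spelling out, since the lemma itself imposes no positivity hypothesis on the degenerate weight $n_0$. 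A slightly more robust alternative, which the paper itself uses for the elliptic analogue (Lemma \ref{elweak} in Section \ref{section5}), is the spatial perturbation $v+\varepsilon e^{\beta x}$: one computes $\mathcal{L}(e^{\beta x})=-(a\beta^2+b\beta)e^{\beta x}<0$ for $\beta$ large, using only the bounds \eqref{Proof-TH1-11}--\eqref{Proof-TH1-12}, so strictness is achieved with no reference to $n_0$ beyond $n_0\ge 0$, and the same limiting argument concludes.
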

	\begin{proof} The proof is close to \cite{MR3925527}, so we omit it.
	\end{proof}
	
	Next, we state the Hopf's lemma.
	\begin{lemma}\label{dpHopf} Assume that $v\in C_1^2\big((0,d_0)\times (0,T]\big)\cap C\big([0,d_0]\times [0,T]\big)$.
		If $v$ satisfies \eqref{Proof-TH1-7}, and $v(x,t)<v(0,t_0)$ for any $(x,t)$ in the neighborhood $D_T$ of $(0,t_0)$, 
		defined by
		\begin{equation*}
			\begin{aligned}
				D_T=\{(x,t):(x-\ell)^2+(t_0-t)<\ell^2,~0<x<\ell/2,~0<t\leq t_0\},
			\end{aligned}
		\end{equation*}
		where $\ell\in (0,d_0)$, and $t_0-3\ell^2/4>0$. Then it holds that
		\begin{equation}\label{Proof-TH1-8}
			\frac{\partial v(0,t_0)}{\partial \vec{n}}>0.
		\end{equation}
		where $\vec{n}$ is the outer unit normal vector at the point $(0, t_0)$.
	\end{lemma}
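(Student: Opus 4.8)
The plan is to follow the classical barrier-function proof of Hopf's lemma, adapted to the degenerate parabolic operator $\mathcal{L}$ and to the parabolic cap $D_T$. The crucial structural point is that, although the coefficient $n_0$ of the time derivative in $\mathcal{L}$ degenerates as $x\to 0$, throughout the \emph{open} set $D_T$ one has $n_0>0$ (since $(0,\ell/2)\subset\Omega_2$), and the degeneracy at the single contact point $(0,t_0)$ will turn out to be harmless — in fact beneficial — for the sign of $\mathcal{L}$ applied to the barrier. I would therefore confine every differential inequality to $D_T$, where $\mathcal{L}$ is genuinely parabolic, and use only continuity at $(0,t_0)$.

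First I would introduce the barrier function
\begin{equation*}
h(x,t)=e^{-\alpha r(x,t)}-e^{-\alpha\ell^{2}},\qquad r(x,t)=(x-\ell)^{2}+(t_{0}-t),
\end{equation*}
with a large parameter $\alpha>0$ to be fixed. By construction $h>0$ in $D_T$, while $h=0$ on the curved part of $\partial D_T$ where $r=\ell^{2}$; in particular $h(0,t_0)=0$. A direct computation using \eqref{Proof-TH1-2} gives
\begin{equation*}
\mathcal{L}h=\alpha e^{-\alpha r}\big[\,n_{0}+2a-4a\alpha(x-\ell)^{2}+2b(x-\ell)\,\big].
\end{equation*}
In $D_T$ one has $0<x<\ell/2$, hence $(x-\ell)^{2}>\ell^{2}/4$; moreover \eqref{viscoe} and \eqref{LF-7} yield $a\ge 1/(2B)$, together with the uniform bounds $n_{0}\le m$, $a\le 2B$, and $|b|\le C_{b}$ coming from \eqref{inda0}, \eqref{viscoe}, and \eqref{LF-7}. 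Consequently the negative term $-4a\alpha(x-\ell)^{2}$ dominates once $\alpha$ is large enough (depending only on $m,B,C_{b},\ell$), and I obtain $\mathcal{L}h\le 0$ in $D_T$. The potentially bad contribution $n_{0}h_{t}=n_{0}\alpha e^{-\alpha r}\ge 0$ is controlled precisely because $n_{0}$ is bounded, and its vanishing near $x=0$ only makes the bracket more negative.

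Next I would run the comparison. Set $w:=v+\epsilon h$ with small $\epsilon>0$; then $\mathcal{L}w=\mathcal{L}v+\epsilon\mathcal{L}h\le 0$ in $D_T$ by \eqref{Proof-TH1-7} and the previous step, so Lemma \ref{dpweak} gives that $w$ attains its maximum over $\bar D_T$ on $\partial_{p}D_T$. This boundary consists of the curved bottom $\Gamma_{1}=\{r=\ell^{2}\}$, the flat lateral side $\Gamma_{2}=\{x=\ell/2\}$, and the contact point $(0,t_0)$ (the top $\{t=t_{0},\,0<x<\ell/2\}$ being excluded). On $\Gamma_{1}$ one has $h=0$, so $w=v\le v(0,t_0)$ by continuity of the maximum hypothesis. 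On the compact set $\Gamma_{2}$, which is separated from the contact point by $x$-distance $\ell/2$, the maximum hypothesis yields $v\le v(0,t_0)-\delta$ for some $\delta>0$; choosing $\epsilon<\delta/\max_{\Gamma_{2}}h$ then forces $w<v(0,t_0)$ there. Since $h(0,t_0)=0$ gives $w(0,t_0)=v(0,t_0)$, the maximum of $w$ over $\bar D_T$ equals $v(0,t_0)$ and is attained at $(0,t_0)$.

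Finally, to extract \eqref{Proof-TH1-8}, I would approach $(0,t_0)$ from within the cap along the segment $t=t_{0}$, $x\in(0,\ell/2)$, which lies in $D_T$. From $w(x,t_0)\le w(0,t_0)=v(0,t_0)$ and $h(0,t_0)=0$,
\begin{equation*}
\frac{v(x,t_0)-v(0,t_0)}{x}\le-\epsilon\,\frac{h(x,t_0)}{x}\longrightarrow-\epsilon\,h_{x}(0,t_0)=-2\epsilon\alpha\ell\,e^{-\alpha\ell^{2}}<0
\end{equation*}
as $x\to 0^{+}$, so $v_{x}(0,t_0)<0$ and hence $\partial v(0,t_0)/\partial\vec n=-v_{x}(0,t_0)>0$, which is \eqref{Proof-TH1-8}. \textbf{The main obstacle} is the degeneracy of $\mathcal{L}$ at $x=0$; the argument sidesteps it by keeping all inequalities inside $D_T$ (where $n_0>0$) and by noting that the degenerate time-coefficient actually improves the sign of $\mathcal{L}h$. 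The remaining technical points — the strict gap $\delta$ on $\Gamma_{2}$ and the existence of the one-sided normal derivative at the corner $(0,t_0)$ — follow from the assumed regularity $v\in C\big([0,d_0]\times[0,T]\big)$ together with the $C_{1}^{2}$ smoothness inside.
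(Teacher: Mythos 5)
Your proposal is correct and follows essentially the same route as the paper's proof: the same exponential barrier $e^{-\alpha[(x-\ell)^2+(t_0-t)]}-e^{-\alpha\ell^2}$ on the parabolic cap, the same computation showing the $-4a\alpha^2(x-\ell)^2$ term dominates (using $n_0\le m$, $a\in[1/(2B),2B]$, and the bound on $b$) so that $\mathcal{L}$ applied to the barrier is nonpositive for large $\alpha$, the same comparison via Lemma \ref{dpweak} with a small multiple of the barrier chosen to beat the strict gap on the flat side $x=\ell/2$, and the same difference-quotient evaluation of the barrier's normal derivative at $(0,t_0)$. The only differences are cosmetic (you fix $\alpha$ before $\varepsilon$, and work with $v+\varepsilon h$ rather than $v-v(0,t_0)+\varepsilon q$, which differ by a constant).
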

	
	\begin{proof} 
		For $(x,t)\in D_T$, define the auxiliary functions
		\begin{equation*}
			q(\alpha,x,t)=e^{-\alpha\left[(x-\ell)^2+(t_0-t)\right]}-e^{-\alpha\ell^2},
		\end{equation*}
		and
		\begin{equation*}
			\varphi(\varepsilon,\alpha,x,t)=v(x,t)-v(0,t_0)+\varepsilon q(\alpha,x,t),
		\end{equation*}
		where $\alpha$ and $\varepsilon$ are constants to be determined later. 
		
		\underline{\emph{Step 1: Fixing $\varepsilon$}.} Note that 
		$\partial_p D_T$ consists of 
		\begin{equation*}
			\Sigma_1:=\{(x,t):(x-\ell)^2+(t_0-t)<\ell^2,~x=\ell/2,~0<t\leq t_0\}, 
		\end{equation*}
		and
		\begin{equation*}
			\Sigma_2:=\{(x,t):(x-\ell)^2+(t_0-t)=\ell^2,~0\leq x\leq \ell/2,~0<t\leq t_0\}.
		\end{equation*}

		First, it is easy to see that
		\begin{equation*}
			\begin{aligned}
				&\varphi(\varepsilon,\alpha,0,t_0)=0,\\
				&0\leq q(\alpha,x,t)\leq 1\quad &&\mathrm{on}\ \Sigma_1,\\
				&q(\alpha,x,t)=0\quad \mathrm{and}\quad v(x,t)-v(0,t_0)\leq 0\quad &&\mathrm{on}\ \Sigma_2. 
			\end{aligned}
		\end{equation*}
		Next, one can choose $\varepsilon_0\in(0,1)$ sufficiently small such that 
		$$v(x,t)-v(0,t_0)\leq-\varepsilon_0<0\quad \mathrm{on}\ \Sigma_1.$$ 
		Therefore, for such an $\varepsilon_0$, it holds that
		\begin{equation*}
			\left\{ 
			\begin{aligned}
				&\varphi(\varepsilon_0,\alpha,x,t)\leq0 \quad \text{on} \ \partial_pD_T,\\
				&\varphi(\varepsilon_0,\alpha,0,t_0)=0.
			\end{aligned}
			\right.
		\end{equation*}

		\underline{\emph{Step 2: Fixing $\alpha$}.} It follows from basic calculations that
		\begin{equation}\label{Proof-TH1-9}
			\begin{aligned}
				\mathcal{L}q&=n_0q_t-a(x,t)q_{xx}-b(x,t)q_x\\
				&= \left\{-4a(x-\ell)^2\alpha^2+[n_0+2a+2b(x-\ell)]\alpha\right\}e^{-\alpha\left[(x-\ell)^2+(t_0-t)\right]}.
			\end{aligned}
		\end{equation}
		Then, one may estimate
		\begin{equation}\label{Proof-TH1-10}
			\begin{aligned}
				n_0\leq m,\quad
				-\ell<x-\ell\leq -\ell/2,
			\end{aligned}
		\end{equation}
		\begin{equation}\label{Proof-TH1-11}
			\begin{aligned}
				1/(2B)\leq a(x,t)=\mu\left(\frac{n_0}{(\eta_2)_x}\right)\frac{1}{(\eta_2)_x}\leq 2B,
			\end{aligned}
		\end{equation}
		and
		\begin{equation}\label{Proof-TH1-12}
			\begin{aligned}
				|b(x,t)|&=\mu^\prime\left(\frac{n_0}{(\eta_2)_x}\right)\frac{1}{(\eta_2)_x}\left[\frac{(n_0)_x}{(\eta_2)_x}-\frac{n_0(\eta_2)_{xx}}{(\eta_2)_x^2}\right]\\
				&\quad+\mu\left(\frac{n_0}{(\eta_2)_x}\right)\left(-\frac{(\eta_2)_{xx}}{(\eta_2)_x^2}\right)\\
				&\leq B\cdot 2\cdot (2m+4mMT)+B\cdot (4MT)\leq 8B(m+1),
			\end{aligned}
		\end{equation}
		where one has used $T\leq\frac{1}{2M}$ and \eqref{LF-7}.
		Hence, the combination of \eqref{Proof-TH1-9}--\eqref{Proof-TH1-12} leads to
		\begin{equation*}
			\begin{aligned}
				\mathcal{L}q
				\leq \bigg[-\frac{\ell^2}{2B}\alpha^2+(m+4B+8B(m+1)\ell)\alpha\bigg]e^{-\alpha\left[(x-\ell)^2+(t_0-t)\right]},
			\end{aligned}
		\end{equation*}
		which implies that there exists $\alpha_0=\alpha_0(\ell,B,m)>0$ large enough such that
		\begin{equation*}
			\mathcal{L}q(\varepsilon_0,\alpha_0,x,t)\leq 0\quad \mathrm{in}~D_T.
		\end{equation*}
		This together with \eqref{Proof-TH1-7} gives
		\begin{equation*}
			\begin{aligned}
				\mathcal{L}\varphi(\varepsilon_0,\alpha_0,x,t)\leq \mathcal{L}v(x,t)+\varepsilon_0\mathcal{L}q(\alpha_0,x,t)
				\leq \varepsilon_0\mathcal{L}q(\alpha_0,x,t)\leq 0\quad \mathrm{in}~D_T.
			\end{aligned}
		\end{equation*}
		
		Therefore, we conclude that
		\begin{equation}\label{Proof-TH1-13}
			\left\{ 
			\begin{aligned}
				&\mathcal{L}\varphi(\varepsilon_0,\alpha_0,x,t)\leq 0\quad &&\text{in}~D_T,\\
				&\varphi(\varepsilon_0,\alpha_0,x,t)\leq 0\quad &&\text{on}~\partial_p D_T,\\
				&\varphi(\varepsilon_0,\alpha_0,0,t_0)=0,
			\end{aligned}
			\right.
		\end{equation}
		which, together with Lemma \ref{dpweak}, deduces that $\varphi(\varepsilon_0,\alpha_0,x,t)$ attains its maximum at $(0,t_0)$. In particular,
		\begin{align*}
			\varphi(\varepsilon_0,\alpha_0,x,t_0)\leq \varphi(\varepsilon_0,\alpha_0,0,t_0)\quad  \text{for}\  x\in (0, \ell/2),
		\end{align*}
		and this implies
		\begin{align*}
			\frac{\partial \varphi(\varepsilon_0,\alpha_0,0,t_0)}{\partial \vec{n}}\geq 0.
		\end{align*}
		It follows that
		\begin{align*}
			\frac{\partial v(0,t_0)}{\partial \vec{n}}\geq-\varepsilon_0 \frac{\partial q(\alpha_0,0,t_0)}{\partial \vec{n}}
			=2\varepsilon_0\alpha_0 \ell e^{-\alpha_0 \ell^2}>0,
		\end{align*}
		and we complete the proof.
	\end{proof}
	
	\subsection{The strong maximum principle}
	To establish the strong maximum principle, one first needs to study the t-derivative of the interior maximum point. 
	
	\begin{lemma}\label{dpt-derivative}
		Assume that $v\in C_1^2\big((0,d_0)\times (0,T]\big)\cap C\big([0,d_0]\times [0,T]\big)$.
		If $v$ satisfies \eqref{Proof-TH1-7}, and attains its maximum in an interior point $P_0=(x_0,t_0)$ of $(0,d_0)\times (0,T]$, then
		$v(P)=v(P_0)$ for any $P=(x,t_0)$ in $(0,d_0)\times (0,T]$. 
	\end{lemma}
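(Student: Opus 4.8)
The plan is to argue by contradiction and reduce the whole statement to a single application of the Hopf lemma, Lemma~\ref{dpHopf}. Write $M:=v(P_0)=v(x_0,t_0)$ for the maximal value, so that $v\le M$ on the relevant set and $\mathcal{L}v\le 0$ by \eqref{Proof-TH1-7}. Suppose the conclusion fails, so that the open set $B:=\{x\in(0,d_0):v(x,t_0)<M\}$ is nonempty; it is a proper open subset of $(0,d_0)$ since $x_0\notin B$. As a subset of $\mathbb{R}$, $B$ is a disjoint union of open intervals, and at least one of its components $(\alpha,\beta)$ has an endpoint, call it $\bar x$, lying interior to $(0,d_0)$. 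By continuity $v(\bar x,t_0)=M$, while $v(\cdot,t_0)<M$ on the adjacent open interval. This yields exactly the one-sided configuration demanded by Lemma~\ref{dpHopf}: a point at which $v$ equals its maximum, flanked on one side along the slice $\{t=t_0\}$ by strictly smaller values.

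Next I would translate the parabolic region of Lemma~\ref{dpHopf} so that its vertex sits at $(\bar x,t_0)$ and it opens into $\{t<t_0\}$ on the side where $v(\cdot,t_0)<M$; its outer unit normal $\vec n$ at the vertex is then horizontal. The key structural point is that the degeneracy of $\mathcal{L}$ in the $t$-direction (the coefficient $n_0$ multiplying $v_t$) is harmless precisely because the admissible region is the parabola-shaped set $(x-\ell)^2+(t_0-t)<\ell^2$ rather than a round disk: the barrier $q$ and the estimate $\mathcal{L}q\le 0$ established in the proof of Lemma~\ref{dpHopf} transfer verbatim after the translation, since only the bounds \eqref{Proof-TH1-11}--\eqref{Proof-TH1-12} on $a,b$ and the crude bound $n_0\le m$ are used. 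Granting that $v<M$ holds throughout the \emph{interior} of this region, Lemma~\ref{dpHopf} (with $M$ playing the role of $v(0,t_0)$) gives $\partial v/\partial\vec n\,(\bar x,t_0)>0$. On the other hand $\vec n$ is horizontal, and $\bar x$ is an interior maximum of the $C^1$ function $x\mapsto v(x,t_0)$, so $v_x(\bar x,t_0)=0$ and hence $\partial v/\partial\vec n\,(\bar x,t_0)=0$, a contradiction. Applying the mirror configuration when the small values lie to the left of $\bar x$ then covers all cases and gives constancy on the whole slice.

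The hard part is supplying the hypothesis $v<M$ in the interior of the parabolic region, not merely along its top edge $\{t=t_0\}$, where it is immediate from the choice of $\bar x$; \emph{a priori} $v$ could climb back up to $M$ at some earlier time inside the region. To control this I would run a first-contact (largest-region) argument: starting from an interior point $(\hat x,t_0)$ with $v(\hat x,t_0)<M$, where $v<M$ in a full two-dimensional neighbourhood by continuity, I would enlarge an inscribed copy of the parabolic region inside the open set $\{v<M\}\cap\{t\le t_0\}$ until its closure first meets $\{v=M\}$. The delicate issue, and the one I expect to be the genuine obstacle, is arranging this first contact to occur at a point of the top slice $\{t=t_0\}$ with horizontal outer normal: if contact instead occurred on the curved portion $\Sigma_2$ at some $t<t_0$, the normal would acquire a temporal component and the degenerate direction of $\mathcal{L}$ would re-enter, so Lemma~\ref{dpHopf} as stated would not apply. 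Ensuring a horizontal contact—by choosing the vertex position as far as possible along the level $t=t_0$ before contact, which is exactly what distinguishes the horizontal slice from the downward direction treated separately in the strong maximum principle—is the step requiring real care, after which the contradiction of the preceding paragraph closes the argument.
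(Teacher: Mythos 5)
Your reduction to a one-sided contact point $(\bar x,t_0)$ on the slice, and your final contradiction (a horizontal Hopf derivative against $v_x(\bar x,t_0)=0$ at an interior spatial maximum), are both sound; the gap is exactly the step you flag yourself, and it is not a technicality but the entire content of the lemma. To invoke Lemma \ref{dpHopf} at $(\bar x,t_0)$ you must exhibit a parabolic region with vertex $(\bar x,t_0)$, opening into the side where $v<M$, on which $v<M$ holds throughout (indeed strictly on the closure of the lateral edge $\Sigma_1$, where $\varepsilon_0$ is chosen). Nothing in the hypotheses prevents the contact set $\{v=M\}$ from accumulating at $(\bar x,t_0)$ tangentially from below: a priori it could contain points $(x_k,t_k)$ with $x_k\to\bar x$, $t_k\to t_0$ and $t_0-t_k=o\big((\bar x-x_k)^2\big)$, and one checks that such points lie \emph{inside} the region $\{(x-\bar x+\ell)^2+(t_0-t)<\ell^2\}$ for every $\ell>0$, so no choice of $\ell$ is admissible at $\bar x$; moreover the same obstruction may occur at every other point of $\{v=M\}\cap\{t=t_0\}$, so a sliding or enlarging family need never achieve a ``first contact with horizontal normal'' at all. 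Ruling out this configuration is equivalent to proving the lemma, so the first-contact scheme as proposed is circular.

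The paper sidesteps this by changing what the barrier is asked to prove. Proposition \ref{dpellipsoid} takes an arbitrary closed ellipsoid contained in $\{v<M\}$ except for a boundary contact point $(\bar x,\bar t)$, and concludes a \emph{location} constraint, $\bar x=x_*$, rather than a sign of a normal derivative; no assumption on the direction of the normal at the contact point is needed, so contact on the ``curved portion'' --- precisely the case that defeats your scheme --- is what gets excluded. The mechanism is the same barrier computation you wanted to transfer, but centered at the ellipsoid's center instead of at the contact point: near a contact point with $|\bar x-x_*|>0$ one has the lower bound \eqref{Proof-TH1-14}, so the good term $-4a(x-x_*)^2\alpha^2$ in $\mathcal{L}q$ dominates for large $\alpha$, and Lemma \ref{dpweak} on a small ball around $(\bar x,\bar t)$ gives the contradiction. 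Slice constancy then follows by the standard argument of \cite{MR0181836}, which the paper cites rather than reproduces: for $x$ near $\bar x$ on the side where $v(\cdot,t_0)<M$, let $d(x)$ be the distance from $(x,t_0)$ to $\{v=M\}$; the proposition forces the nearest contact point to be $(x,t_0\pm d(x))$, whence $\lvert d(x+\delta)^2-d(x)^2\rvert\le\delta^2$, so $d^2$ has zero derivative and is constant, contradicting $d(x)\to 0$ as $x\to\bar x$ while $d>0$ away from $\bar x$. If you wish to salvage your outline, you would first have to prove a Hopf-type statement valid at contact points with non-horizontal normal --- which is essentially Proposition \ref{dpellipsoid}; as written, the proposal does not close.
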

	
	Lemma \ref{dpt-derivative} is a direct consequence of the following proposition by a standard argument \cite{MR0181836}. 
	\begin{proposition}\label{dpellipsoid}
		Suppose that  $v\in C_1^2\big((0,d_0)\times (0,T]\big)\cap C\big([0,d_0]\times [0,T]\big)$ satisfies \eqref{Proof-TH1-7} and has a maximum $M_0$ in $(0,d_0)\times (0,T]$. If  $(0,d_0)\times (0,T]$ contains a closed solid ellipsoid
		\begin{align*}
			D_T^\sigma:=\big\{(x,t): (x-x_*)^2+\sigma(t-t_*)^2\leq r^2,\  \sigma>0\big\}
		\end{align*}
		and  $v(x,t)<M_0$ for any interior point $(x,t)$ of $D_T^\sigma$ and  $v(\bar{x},\bar{t})=M_0$ at some $(\bar{x},\bar{t})\in \partial_p D_T^\sigma$. Then $\bar{x}=x_*$.
	\end{proposition}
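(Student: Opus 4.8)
The plan is to argue by contradiction following the Hopf--Nirenberg barrier method: assuming $\bar x\neq x_*$, I will produce a strictly positive outward normal derivative of $v$ at $Q=(\bar x,\bar t)$, which contradicts the fact that $Q$ is a maximum point. The first observation is that, under $\bar x\neq x_*$, the point $Q$ cannot be the top pole $(x_*,t_*+r/\sqrt\sigma)$ of the ellipsoid. Since the largest time on $\bar D_T^\sigma$ is $t_*+r/\sqrt\sigma$, attained \emph{only} at that pole, and since $D_T^\sigma\subset(0,d_0)\times(0,T]$ forces $t_*+r/\sqrt\sigma\le T$, the equality $\bar t=T$ would make $Q$ the top pole and hence $\bar x=x_*$, a contradiction; thus $\bar t<T$. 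Together with $\bar t>0$ and $\bar x\in(0,d_0)$, this shows $Q$ is an interior point of $(0,d_0)\times(0,T]$, so that $v$, attaining its maximum $M_0$ there, must satisfy $\nabla v(Q)=0$; in particular every directional derivative of $v$ at $Q$ vanishes. The whole proof reduces to contradicting this.

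For the barrier I would not work with the ellipsoid directly but with an interior ball. Using the uniform interior ball condition of the smooth convex ellipsoid, choose $\delta>0$ small and a ball $B=B_\delta(Q_0)\subset\bar D_T^\sigma$ internally tangent to $\partial D_T^\sigma$ and meeting it \emph{only} at $Q$, with center $Q_0$ on the inward normal at $Q$. Because $\bar x\neq x_*$, the outward normal to the ellipsoid at $Q$ has a nonzero spatial component, so $x_{Q_0}\neq\bar x$, and on a small ball $K=B_{\delta'}(Q)$ one has $|x-x_{Q_0}|\geq\kappa>0$. Define $h(x,t)=e^{-\alpha[(x-x_{Q_0})^2+(t-t_{Q_0})^2]}-e^{-\alpha\delta^2}$. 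A direct computation gives $\mathcal{L}h=\big[-4a\alpha^2(x-x_{Q_0})^2+2a\alpha+2b\alpha(x-x_{Q_0})-2n_0\alpha(t-t_{Q_0})\big]e^{-\alpha[(x-x_{Q_0})^2+(t-t_{Q_0})^2]}$; invoking $1/(2B)\le a\le 2B$, the bound on $b$ from \eqref{Proof-TH1-11}--\eqref{Proof-TH1-12}, $n_0\le m$, and $|x-x_{Q_0}|\geq\kappa$, the quadratic-in-$\alpha$ term dominates, so $\mathcal{L}h\le0$ on the lens $\Sigma:=B\cap K$ once $\alpha$ is large.

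Next set $w=v+\varepsilon h$, so $\mathcal{L}w\le0$ in $\Sigma$ for every $\varepsilon>0$. The boundary $\partial\Sigma$ splits into the spherical piece $\partial B\cap\bar K$, where $h=0$ and hence $w=v\le M_0$, and the piece $\partial K\cap\bar B$. Every point of the latter lies at distance $\delta'>0$ from $Q$ and inside $\bar B$, which touches $\partial D_T^\sigma$ only at $Q$; therefore it lies in the open interior of the ellipsoid, where $v<M_0$. By compactness $v\le M_0-\eta$ on this piece for some $\eta>0$, and choosing $\varepsilon$ so small that $\varepsilon\sup h\le\eta$ yields $w\le M_0$ there as well. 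Thus $w\le M_0$ on $\partial\Sigma$, and Lemma \ref{dpweak} gives $w\le M_0$ in $\Sigma$. Since $w(Q)=v(Q)=M_0$, $w$ attains its maximum at $Q\in\partial\Sigma$, so $\partial w(Q)/\partial\vec{n}\ge0$ for the outward normal $\vec{n}$ to $B$ (which coincides with the outward normal to the ellipsoid) at $Q$. Because $\partial h(Q)/\partial\vec{n}=-2\alpha\delta e^{-\alpha\delta^2}<0$, this forces $\partial v(Q)/\partial\vec{n}>0$, contradicting $\nabla v(Q)=0$. Hence $\bar x=x_*$.

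The main obstacle I anticipate is the geometric bookkeeping of the comparison region, which must simultaneously make the degenerate parabolic barrier a supersolution ($\mathcal{L}h\le0$) and keep $w\le M_0$ on the entire boundary. These two demands pull in opposite directions: the barrier estimate needs the spatial variable bounded away from the center, which is \emph{precisely} where $\bar x\neq x_*$ is used, while the boundary bound needs a uniform gap $v\le M_0-\eta$, which is not available on $\partial D_T^\sigma\setminus\{Q\}$ since $v$ is only known to be $<M_0$ in the open interior. Replacing the ellipsoid by an internally tangent ball touching $\partial D_T^\sigma$ only at $Q$ is the device that reconciles them, as it places all auxiliary boundary points strictly inside the ellipsoid. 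By contrast, the degeneracy of $\mathcal{L}$ (the coefficient $n_0$ may vanish) is harmless, since only the bound $n_0\le m$ enters the estimates.
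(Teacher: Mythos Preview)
Your argument is correct but follows a somewhat different route from the paper. The paper first invokes a reduction from \cite{MR3925527} so that the maximum $M_0$ is attained on $\partial D_T^\sigma$ at no more than two isolated points, then uses the \emph{ellipsoid itself} as the level set of the barrier, setting $q(\alpha,x,t)=e^{-\alpha[(x-x_*)^2+\sigma(t-t_*)^2]}-e^{-\alpha r^2}$, and takes as comparison region a ball \emph{centered} at $(\bar x,\bar t)$ with radius small enough to avoid the second boundary maximum; the contradiction is obtained directly from the weak maximum principle, since $\varphi<0$ throughout the ball while $\varphi(\bar x,\bar t)=0$ at its center. Your version instead replaces the ellipsoid by an internally tangent ball meeting $\partial D_T^\sigma$ only at $Q$, which lets you dispense with the reduction step entirely, and you close by a Hopf-type normal-derivative argument rather than a direct sign contradiction. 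Both approaches hinge on the same key point---that $\bar x\neq x_*$ forces the spatial displacement from the barrier center to be uniformly bounded below, so the quadratic-in-$\alpha$ term dominates in $\mathcal{L}q$---and both appeal to Lemma~\ref{dpweak} on the comparison region. Your geometric device of the interior tangent ball is a bit more self-contained (no external reduction needed), while the paper's choice of keeping the ellipsoidal barrier makes the link to the original geometry more transparent; either way the degeneracy of $\mathcal{L}$ plays no role beyond the bound $n_0\le m$, as you observed.
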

	
	\begin{proof}[Proof of Proposition \ref{dpellipsoid}]  Similar to the reduction in \cite{MR3925527},  one may assume that $v$ attains the maximum $M_0$ in $D_T^\sigma$ at no more than two isolated points $(\bar{x},\bar{t})$ and $(\tilde{x},\tilde{t})$ on the boundary $\partial_p D_T^\sigma$.  
		
		We will prove the desired result by contradiction. 
		Suppose $\bar{x}\neq x_*$. First, it is easy to see that $\bar{t}<T$ by applying Lemma \ref{dpweak} on  $[0,d_0]\times [0,T]$. Next, choosing 
		a  closed ball $B_T((\bar{x},\bar{t}), \tilde{r})\subseteq (0,d_0)\times (0,T]$ with center $(\bar{x},\bar{t})$ and radius $\tilde{r}<\min\{|\bar{x}-x_*|, |\bar{x}-\tilde{x}|\}$, then one has 
		\begin{equation}\label{Proof-TH1-14}
			\begin{aligned}
				|x-x_*|\geq|\bar{x}-x_*|-\tilde{r}=:\hat{r}>0
			\end{aligned}
		\end{equation}
		for any  $(x,t)\in B_T((\bar{x},\bar{t}), \tilde{r})$.

		Define the auxiliary functions 
		\begin{align*}
			q(\alpha,x,t)=e^{-\alpha[(x-x_*)^2+\sigma(t-t_*)^2]}-e^{-\alpha r^2},
		\end{align*}
		and
		\begin{align*}
			\varphi(\varepsilon,\alpha,x,t)=w(x,t)-M_0+\varepsilon q(\alpha,x,t)
		\end{align*}
		with $\alpha$ and $\varepsilon$ to be determined later.

		By performing some careful calculations similar to the proof of Lemma \ref{dpHopf} by using \eqref{Proof-TH1-14}, 
		one can choose $\varepsilon_0>0$ and $\alpha_0>0$ such that
		\begin{equation}\label{Proof-TH1-15}
			\left\{ 
			\begin{aligned}
				&\mathcal{L}\varphi(\varepsilon_0,\alpha_0,x,t)\leq 0&&\text{in}~B_T((\bar{x},\bar{t}), \tilde{r}),\\
				&\varphi(\varepsilon_0,\alpha_0,x,t)< 0&&\text{on}~\partial_p B_T((\bar{x},\bar{t}), \tilde{r}),\\
				&\varphi(\varepsilon_0,\alpha_0,\bar{x},\bar{t})=0.
			\end{aligned}
			\right.
		\end{equation}
		Applying Lemma  \ref{dpweak} to $\eqref{Proof-TH1-15}_1$--$\eqref{Proof-TH1-15}_2$ leads to
		\begin{align*}
			\varphi(\varepsilon_0,\alpha_0,x,t)<0\quad  \text{in}\ B_T((\bar{x},\bar{t}), \tilde{r})
		\end{align*}
		which contradicts $\eqref{Proof-TH1-15}_2$ due to $(\bar{x},\bar{t})\in B_T((\bar{x},\bar{t}), \tilde{r})$.
	\end{proof}
	
	Next, we present a localized strong maximum principle, which reads as:
	\begin{lemma}\label{dplsm}
		Suppose that $v\in C_1^2\big((0,d_0)\times (0,T]\big)\cap C\big([0,d_0]\times [0,T]\big)$ satisfies \eqref{Proof-TH1-7}.
		If $v$ attains its maximum in the interior point $P_0=(x_0,t_0)$ of $(0,d_0)\times (0,T]$, then there exists a rectangle
		\begin{align*}
			\mathcal{R}(P_0):=\big\{(x,t): x_0-a_1\leq x\leq x_0+a_1,\  t_0-a_0\leq t\leq t_0\big\}
		\end{align*}
		in $(0,d_0)\times (0,T]$ such that
		$v(P)=v(P_0)$ for any $P\in \mathcal{R}(P_0)$.
	\end{lemma}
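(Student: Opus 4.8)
The plan is to produce the rectangle $\mathcal{R}(P_0)$ by combining the horizontal propagation already established in Lemma~\ref{dpt-derivative} with a backward-in-time propagation, and then to fill the rectangle one time-slice at a time. Since $v$ attains its maximum $M_0:=v(P_0)$ at the interior point $P_0=(x_0,t_0)$, Lemma~\ref{dpt-derivative} immediately gives $v(x,t_0)=M_0$ for every $x\in(0,d_0)$. Choosing $a_1>0$ so small that $[x_0-a_1,x_0+a_1]\subset(0,d_0)$, the whole top edge $\{(x,t_0):\abs{x-x_0}\leq a_1\}$ lies in the maximum set. It therefore remains to move the maximum backward in time: I claim there is $a_0\in(0,t_0)$ such that $v(x_0,t)=M_0$ for all $t\in[t_0-a_0,t_0]$.

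The crux is this backward propagation, which I would prove by contradiction, as the parabolic companion of Lemma~\ref{dpt-derivative}. Note first that at $x_0\in(0,d_0)\subset\Omega_2$ one has $n_0(x_0)>0$, so the operator $\mathcal{L}$ is genuinely (non-degenerate) parabolic near $P_0$, with $1/(2B)\leq a\leq 2B$ and $\abs{b}\leq 8B(m+1)$ by \eqref{Proof-TH1-11}--\eqref{Proof-TH1-12}. If the maximum failed to propagate backward, there would be times $t<t_0$ arbitrarily close to $t_0$ carrying points with $v<M_0$; one then takes a point below $P_0$ with $v<M_0$ and expands a solid ellipsoid $D_T^\sigma$ as in Proposition~\ref{dpellipsoid} until it first meets the maximum set. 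Proposition~\ref{dpellipsoid} localizes this first contact to $\bar{x}=x_*$, and the parabolic orientation, encoded in the sign $n_0(x_0)>0$ of the time coefficient, rules out the contact occurring strictly later in time with the interior strictly below $M_0$. Running the barrier construction of Lemma~\ref{dpHopf} (the same auxiliary functions $q,\varphi$ and the same choice of $\varepsilon_0,\alpha_0$, now on a neighborhood whose extreme point is the contact point) together with the weak maximum principle Lemma~\ref{dpweak} then yields a strict sign for the normal derivative that is incompatible with that point being a maximum. This contradiction forces the maximum to propagate into a backward interval, giving $a_0>0$.

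Once the backward interval is secured, the rectangle is filled by re-applying Lemma~\ref{dpt-derivative} at each lower time level: for every $t\in[t_0-a_0,t_0]$ the point $(x_0,t)$ is again an interior maximum point, since $v(x_0,t)=M_0$ equals the global maximum, hence $v(x,t)=M_0$ for all $x\in(0,d_0)$, in particular on $\abs{x-x_0}\leq a_1$. Collecting these slices gives $v\equiv M_0$ on $\mathcal{R}(P_0)=[x_0-a_1,x_0+a_1]\times[t_0-a_0,t_0]$, as desired. I expect the backward step to be the main obstacle: horizontal propagation is essentially formal from the ellipsoid proposition, whereas moving the maximum earlier in time genuinely uses the parabolic structure, namely the positivity $n_0(x_0)>0$ of the time coefficient together with the uniform ellipticity $a\geq 1/(2B)$. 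The delicate points are orienting and sizing the enclosing neighborhood so that its interior stays strictly below $M_0$ while its extreme point touches the maximum set, and keeping the whole construction inside $(0,d_0)\times(0,T]$ with $t_0-a_0>0$.
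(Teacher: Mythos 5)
Your outer skeleton is sound: the top edge and the slice-by-slice filling of the rectangle both follow from Lemma \ref{dpt-derivative}, so the whole lemma indeed reduces to backward-in-time propagation of the maximum along the vertical segment through $P_0$. The genuine gap is at exactly the step you flag as the crux. You propose to rule out a first contact at the ellipsoid's pole by ``running the barrier construction of Lemma \ref{dpHopf} (the same auxiliary functions $q,\varphi$ and the same choice of $\varepsilon_0,\alpha_0$)''. This fails. In Lemma \ref{dpHopf} (and likewise in Proposition \ref{dpellipsoid}, via \eqref{Proof-TH1-14}) the inequality $\mathcal{L}q\leq 0$ is obtained from \eqref{Proof-TH1-9} by letting the term $-4a(x-\ell)^2\alpha^2$ dominate for large $\alpha$, which requires $(x-\ell)^2$ to stay bounded away from zero, i.e.\ the region must avoid the vertical line through the center; moreover the time term in that barrier has the unfavorable sign $n_0q_t\geq 0$. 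But Proposition \ref{dpellipsoid} localizes the contact point precisely to that vertical line ($\bar{x}=x_*$), so any neighborhood whose extreme point is the contact point contains points with $x$ arbitrarily close to $x_*$, where \eqref{Proof-TH1-9} together with \eqref{Proof-TH1-11} gives $\mathcal{L}q\approx [n_0+2a]\,\alpha\, e^{-\alpha[(x-x_*)^2+\cdots]}>0$. Hence $q$ is not a subsolution near the pole, Lemma \ref{dpweak} cannot be applied to $\varphi$, and no strict sign for the normal derivative comes out. This blind spot at the poles is exactly why the backward-in-time step needs a differently oriented barrier and cannot be formal from the ellipsoid proposition plus the Hopf construction.

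The paper supplies that barrier: on the rectangle it takes $q(\alpha,x,t)=t_0-t-\alpha(x-x_0)^2$ and the cap $\mathcal{S}_T=\{q\geq 0\}$, and now $\mathcal{L}q=-n_0+2[a+b(x-x_0)]\alpha$ is made nonpositive by choosing $\alpha$ \emph{small}, as in \eqref{Proof-TH1-18}; this is possible only because $n_0$ has a positive lower bound on the rectangle, and the good sign comes from the time-derivative term $n_0q_t=-n_0$, not from an $\alpha^2$ term. The weak maximum principle then forces the maximum of $\varphi$ at $P_0$ alone, whence $v_t(x_0,t_0)\geq\varepsilon_0>0$ as in \eqref{Proof-TH1-22}, and the contradiction is not ``normal derivative versus maximum'' in the Hopf sense but versus the equation: at an interior spatial maximum $v_x=0$ and $v_{xx}\leq 0$, so \eqref{Proof-TH1-7} yields $n_0v_t\leq av_{xx}+bv_x\leq 0$. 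Your route could be repaired in this spirit (for instance a very flat ellipsoid, with $\sigma$ large compared with $B$, $m$ and $1/\min n_0$, and a lens-shaped region kept strictly above the center so that $n_0q_t=-2\alpha\sigma n_0(t-t_*)$ becomes the dominant negative term), but that computation is precisely where the parabolic structure and the positivity of $n_0$ enter, and it is the content missing from your argument; invoking the Hopf barrier as-is does not provide it.
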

	\begin{proof} We will prove the desired result by contradiction. To this end, we suppose that there exists an interior point $P_1=(x_1,t_1)\in (0,d_0)\times (0,T]$ with $t_1< t_0$ such that $v(P_1)<v(P_0)$. 
		
		Connecting  $P_1$ to $P_0$ by a simple smooth curve $\Gamma$, one notices that there exists  $P_*=(x_*,t_*)$ on $\Gamma$ such that $v(P_*)=v(P_0)$ and $v(\bar{P})<v(P_*)$ for any $\bar{P}=(\bar{x},\bar{t})\in\Gamma$ between $P_1$ and $P_*$. 
		
		Assuming that $P_*=P_0$ and $P_1$ is very near to $P_0$, then one may find that there exists a rectangle $\mathcal{R}(P_0)$ in $(0,d_0)\times (0,T]$ with small positive numbers $a_0$ and $a_1$ (will be determined) such that $P_1$ lies on $t=t_0-a_0$.
		
		Since $\mathcal{R}(P_0)\setminus \{t=t_0\}\cap\{t=\bar{t}\}$ contains some $\bar{P}\in \Gamma$ and $v(\bar{P})<v(P_0)$,  it follows from Lemma \ref{dpweak} that
		\begin{equation*}
			\begin{aligned}
				v(P)<v(P_0)\quad \mathrm{for\ any}\ P \in \mathcal{R}(P_0)\setminus \{t=t_0\}\cap\{t=\bar{t}\}.  
			\end{aligned}
		\end{equation*}
		Hence, it holds that $v(P)<v(P_0)$ for any $P \in \mathcal{R}(P_0)\setminus \{t=t_0\}$.
		
		Consider two auxiliary functions 
		\begin{align*}
			q(\alpha,x,t)=t_0-t-\alpha(x-x_0)^2
		\end{align*}
		and
		\begin{align*}
			\varphi(\varepsilon,\alpha,x,t)=v(x,t)-v(P_0)+\varepsilon q(\alpha,x,t)
		\end{align*}
		with parameters $\alpha$ and $\varepsilon$ to be determined later.
		
		Assume further that $P=(x_0-a_1,t_0-a_0)$ is on the parabola $q(\alpha,x,t)=0$, then it holds that
		\begin{align}\label{Proof-TH1-16}
			\alpha=a_0a_1^{-2}.
		\end{align}

		\underline{\emph{Step 1: Fixing $a_1$}.} We first choose
		\begin{align}\label{Proof-TH1-17}
			a_1<\min\{x_0, d_0-x_0\}.
		\end{align}
		
		\underline{\emph{Step 2: Fixing $\alpha$}.}	Note that
		\begin{equation*}
			\begin{aligned}
				\mathcal{L}q&=n_0q_t-a(x,t)q_{xx}-b(x,t)q_x\\
				&=-n_0+2[a+b(x-x_0)]\alpha\\
				&\leq -n_0+(4B+16B(m+1)a_1)\alpha.
			\end{aligned}
		\end{equation*}
		Since $n_0$ has a positive lower bound depending on $x_0-a_1$ in $\mathcal{R}(P_0)$,  one may choose 	
		\begin{align}\label{Proof-TH1-18}
			\alpha_0<\frac{n_0}{4B+16B(m+1)a_1}
		\end{align}
		such that 
		\begin{align}\label{Proof-TH1-19}
			\mathcal{L}\varphi(\varepsilon,\alpha_0,x,t)\leq \mathcal{L}v(x,t)+\mathcal{L}q(\alpha_0,x,t)
			\leq 0 \quad \text{in}\ \mathcal{R}(P_0).
		\end{align}

		\underline{\emph{Step 3: Fixing $a_0$}.} 
		Combing \eqref{Proof-TH1-16}--\eqref{Proof-TH1-18}, we can choose  
		\begin{align*}
			a_0<\min\bigg\{t_0,\ \frac{n_0 a_1^2}{4B+16B(m+1)a_1}\bigg\}.
		\end{align*}

		\underline{\emph{Step 4: Fixing $\varepsilon$}.} Set 
		$\mathcal{S}_T=\{(x,t)\in \mathcal{R}(P_0): q(\alpha_0,x,t)\geq0\}$.	
		Observe that $\partial_p\mathcal{S}_T$ consists of two parts: 
		\begin{equation*}
			\begin{aligned}
				\Sigma_1\ \mathrm{lying\ in}\ \mathcal{R}(P_0)\quad \mathrm{and}\quad \Sigma_2\  \mathrm{lying\ on}\  \mathcal{R}(P_0)\cap \{t=t_0-a_0\}.
			\end{aligned}
		\end{equation*}
		
		First, one finds that
		\begin{equation*}
			\begin{aligned}
				&\varphi(\varepsilon,\alpha_0,x_0,t_0)=0,\\
				&q(\alpha_0,x,t)\ \mathrm{is\ bounded}\ &&\mathrm{on}\ \Sigma_2,\\
				&q(\alpha_0,x,t)=0\quad \mathrm{and}\quad v(x,t)-v(P_0)< 0\quad &&\mathrm{on}\ \Sigma_1\setminus \{P_0\}. 
			\end{aligned}
		\end{equation*}
		Next, one can choose $\varepsilon_0\in(0,1)$ sufficiently small such that 
		\begin{equation*}
			\begin{aligned}
				v(x,t)-v(P_0)\leq-\varepsilon_0<0\quad \mathrm{on}\ \Sigma_2.
			\end{aligned}
		\end{equation*}
		Hence, it holds that
		\begin{equation}\label{Proof-TH1-20}
			\left\{ \begin{aligned}
				&\varphi(\varepsilon_0,\alpha_0,x,t)<0&& \mathrm{on}\  \partial_p\mathcal{S}_T\setminus \{P_0\},\\
				&\varphi(\varepsilon_0,\alpha_0,x_0,t_0)=0.
			\end{aligned}
			\right.
		\end{equation}

		Now,  it follows from \eqref{Proof-TH1-16} and \eqref{Proof-TH1-20} that there exist $\varepsilon_0$ and $\alpha_0$ such that
		\begin{equation}\label{Proof-TH1-21}
			\left\{ 
			\begin{aligned}
				&\mathcal{L}\varphi(\varepsilon_0,\alpha_0,x,t)\leq 0&&\quad \text{in}\ \mathcal{S}_T,\\
				&\varphi(\varepsilon_0,\alpha_0,x,t)<0&&\quad  \text{on}\  \partial_p\mathcal{S}_T\setminus \{P_0\},\\
				&\varphi(\varepsilon_0,\alpha_0,x_0,t_0)=0.
			\end{aligned}
			\right.
		\end{equation}
		In view of Lemma \ref{dpweak} and \eqref{Proof-TH1-21}, $\varphi(\varepsilon_0,\alpha_0,x,t)$  only attains its maximum at $(x_0,t_0)$ in $\mathcal{\bar{S}}_T$. Therefore, one has
		\begin{align*}
			\varphi_t(\varepsilon_0,\alpha_0,x_0,t_0)\geq 0.
		\end{align*}
		Note that $q$ satisfies
		\begin{align*}
			q_t(\alpha_0,x_0,t_0)=-1.
		\end{align*}
		The above two facts yield
		\begin{align}\label{Proof-TH1-22}
			v_t(x_0,t_0)=\varphi_t(\varepsilon_0,\alpha_0,x_0,t_0)-\varepsilon_0q_t(\alpha_0,x_0,t_0)\geq \varepsilon_0.
		\end{align}
		
		On the other hand, since $v$ attains its maximum at $(x_0,t_0)$, it follows that
		\begin{align*}
			n_0v_t(x_0,t_0)\leq a v_{xx}(x_0,t_0)+b v_x(x_0,t_0)\leq 0,
		\end{align*}
		which contradicts \eqref{Proof-TH1-22}. 
	\end{proof}

	Now, we have the strong maximum principle. 
	\begin{lemma}\label{dpsm}
		Suppose that $v\in C_1^2(D_T)\cap C(\bar{D}_T)$ satisfies \eqref{Proof-TH1-7}.
		Then $v$ attains its maximum on  $\partial_p D_T$.
	\end{lemma}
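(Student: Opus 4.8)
The plan is to argue by contradiction, using the localized strong maximum principle (Lemma \ref{dplsm}) to show that an interior maximum would propagate strictly backward in time and thereby violate the existence of an earliest time of attainment. Set $M_0 := \max_{\bar{D}_T} v$, which is attained because $v \in C(\bar{D}_T)$ and $\bar{D}_T$ is compact. Suppose, contrary to the claim, that $M_0$ is not attained anywhere on $\partial_p D_T$; since $\partial_p D_T$ is compact, this means $\max_{\partial_p D_T} v < M_0$, so the nonempty coincidence set $\Sigma := \{(x,t) \in \bar{D}_T : v(x,t) = M_0\}$ is contained in the parabolic interior $\bar{D}_T \setminus \partial_p D_T$.

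Because $\Sigma$ is compact, I would then pick a point at the earliest time of attainment: let $t_* := \min\{ t : (x,t) \in \Sigma\}$, attained at some $P_* = (x_*, t_*) \in \Sigma$ with $x_* \in (0, d_0)$. Since $P_* \notin \partial_p D_T$, it is an interior maximum point in the parabolic sense and admits a small backward rectangle inside $D_T$. Invoking Lemma \ref{dplsm} at $P_*$ produces a rectangle $\mathcal{R}(P_*) = \{|x - x_*| \le a_1,\ t_* - a_0 \le t \le t_*\} \subseteq D_T$ (with $a_0, a_1 > 0$ small enough to fit, which is possible precisely because $P_*$ is not on the parabolic boundary) on which $v \equiv M_0$. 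In particular $(x_*, t_* - a_0) \in \Sigma$ while $t_* - a_0 < t_*$, contradicting the minimality of $t_*$. Hence $M_0$ is attained on $\partial_p D_T$, as asserted.

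I expect the entire difficulty to reside in Lemma \ref{dplsm} rather than in this final step: the operator $\mathcal{L}$ is degenerate parabolic, since dividing by $n_0$ makes the diffusion $a/n_0$ and drift $b/n_0$ singular as $n_0 \to 0$ near $x = 0$, so the classical sphere barrier of Nirenberg's strong maximum principle must be replaced by the parabola barrier used to prove Lemma \ref{dplsm}. What makes the present contradiction close is precisely that a maximum propagates backward along a full rectangle, rather than only along the horizontal slice of Lemma \ref{dpt-derivative}; once this is granted, the remaining bookkeeping—that the earliest-time coincidence point is parabolic-interior and that a sufficiently small backward rectangle fits inside $D_T$—is routine.
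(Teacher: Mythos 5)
Your proof is correct, and it takes a genuinely different (and more economical) route than the paper's. The paper's proof, which is omitted and attributed to the classical chaining argument of \cite{MR0181836, MR3925527}, combines both auxiliary results: Lemma \ref{dplsm} to propagate an interior maximum backward in time along a rectangle, and Lemma \ref{dpt-derivative} to propagate it horizontally, iterating until the value $M_0$ fills the backward-connected part of $D_T$ and reaches $\partial_p D_T$ by continuity. You dispense with Lemma \ref{dpt-derivative} and the chaining altogether: compactness of the coincidence set $\Sigma$ gives an earliest-time maximum point $P_*$, which is parabolic-interior under the contradiction hypothesis, and a single application of Lemma \ref{dplsm} pushes the maximum to a strictly earlier time, contradicting minimality. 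The two technical points your argument rests on are indeed available: the proof of Lemma \ref{dplsm} only imposes \emph{upper} bounds on $a_0,a_1$, so its rectangle can be shrunk to fit inside $D_T$, and that proof is purely local, so ``maximum over $\bar{D}_T$'' is an admissible hypothesis even though the lemma is phrased on the full strip $(0,d_0)\times(0,T]$. What the paper's heavier route buys is the stronger propagation-of-constancy form of the strong maximum principle; note that the literal statement of Lemma \ref{dpsm}, which is all your argument establishes, coincides with the conclusion of the weak principle Lemma \ref{dpweak}, whereas the proof of Theorem \ref{thm1} implicitly uses the stronger form (no \emph{interior} point attains the maximum unless constancy propagates to the lateral boundary, which is excluded by $v_2(d_0,t)<0$) to conclude the strict inequality $v_2(x,t)<v_2(0,t_0)$ at every interior point. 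So your proof is a clean and valid proof of the lemma as stated, but for the downstream application one still needs Lemma \ref{dpt-derivative}, or equivalently the constancy version, rather than merely the existence of a boundary maximum point.
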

	\begin{proof}
		The proof can be routinely completed via Lemma \ref{dpt-derivative} and  Lemma \ref{dplsm}, and thus we omit here. One can refer to \cite{MR0181836, MR3925527} for more details.
	\end{proof}
	
	Finally, we finish the proof of Theorem \ref{thm1}.
	\begin{proof}[Proof of Theorem \ref{thm1}] First, due to $w_0(d_0)<0$ given in \eqref{inda5},  and the continuity on time of the solution,  there exists a time $t_0\in (0,T]$ such that 
		$$v_2(d_0,t)<0\quad \mathrm{for}\ t\in (0,t_0].$$
		For such domain $(0,d_0)\times (0,t_0]$, it follows from Lemma \ref{dpweak} that  $v_2$  attains its  maximum on the set
		$$[0,d_0]\times\{t=0\}\cup\{x=0\}\times (0,t_0]\cup \{x=d_0\}\times (0,t_0].$$
		
		Next, recalling \eqref{inda5} that $w_0\leq0$ in $[0,d_0]$, and $v_2=0$ on $\{x=0\}\times (0,t_0]$ due to $\eqref{main-LF-3}_3$, by Lemma \ref{dpsm}, one concludes that $v_2$  attains its  maximum on the set 
		$$[0,d_0]\times\{t=0\}\cup\{x=0\}\times (0,t_0].$$  
		
		Hence
		$$v_2(x,t)<v_2(0,t_0)\quad \text{for}\ (x,t)\in (0,d_0)\times (0,t_0].$$
		This together with Lemma \ref{dpHopf} yields
		$$\frac{\partial v_2(0,t_0)}{\partial \vec{n}}>0,$$ 
		which contradicts $(v_2)_x=0$ on $\partial \Omega \times (0,T]$ in $\eqref{main-LF-3}_3$. 
	\end{proof}

	\section{Proof of Theorem \ref{thm2}}\label{section5}
	
	Noticing that in the case $\Omega_2\subsetneqq\Omega_1$,  there exists a small number $d_0\in (0,1)$ such that $n_0=0$ but $\rho_0\neq 0$ for $x\in (0,d_0)\cup(1-d_0,1)$ and thus, by $\eqref{main-LF-3}_2$, we have
	\begin{equation}\label{Proof-TH2-1}
		\begin{aligned}
			L_tv_2=\rho_0\frac{(\eta_2)_x}{(\eta_1)_x}(v_1-v_2),
		\end{aligned}
	\end{equation}
	where the operator $L_t$ is defined by
	\begin{equation*}
		\begin{aligned}
			L_t=-a(x,t)v_{xx}+b(x,t)v_x
		\end{aligned}
	\end{equation*}
	with 
	\begin{equation*}
		\begin{aligned}
			a(x,t)=\mu(0)\frac{1}{(\eta_2)_x}\quad \text{and}\quad 
			b(x,t)=\mu(0)\frac{(\eta_2)_{xx}}{(\eta_2)_x^2}. 
		\end{aligned}
	\end{equation*}
	
	Again, we regard the RHS of \eqref{Proof-TH2-1} as a forcing term.  Different with \eqref{Proof-TH1-6-add}, the pressure term vanishes in \eqref{Proof-TH2-1}, so we instead use \eqref{inda5} and the continuity of the solution to deduce 
	\begin{equation}\label{Proof-TH2-2}
		\begin{aligned}
			\rho_0\frac{(\eta_2)_x}{(\eta_1)_x}(v_1-v_2)\leq 0
		\end{aligned}
	\end{equation}
	for $(x,t)\in (0,d_0)\times(0,T]$ since $T>0$ is assumed to be sufficiently small. 
	Hence, for any fixed $t\in (0,T]$, it follows from \eqref{Proof-TH2-1} and \eqref{Proof-TH2-2} that 
	\begin{equation}\label{Proof-TH2-3}
		L_tv_2\leq 0\quad\text{in}~(0,d_0).
	\end{equation}
	One can also show similarly, for any fixed $t\in (0,T]$, that
	\begin{equation}\label{Proof-TH2-4}
		L_tv_2\geq 0\quad\text{in}~(1-d_0,1). 
	\end{equation}

	In the following, we mainly focus on the Hopf's lemma and the strong maximum principle for the operator $L_t$ satisfying \eqref{Proof-TH2-3} since the other scenario \eqref{Proof-TH2-4} can be handled in a similar fashion. For ease of notation, we assume $D$ to be an open and bounded set in $(0,d_0)$, and denote by $\partial D$ the boundary of $D$. 
	
	To begin with, we show the weak maximum principle. 
	\begin{lemma}\label{elweak}
		Assume that $v\in C_1^2(D)\cap C(\bar{D})$. 
		For fixed $t\in (0,T]$, if $v$ satisfies 
		\begin{equation}\label{Proof-TH2-5}
			L_tv\leq 0\quad\mathrm{in}~D,
		\end{equation}
		and $v$ attains its maximum at $x_0$, then $x_0\in \partial D$.
	\end{lemma}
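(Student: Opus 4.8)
The plan is to adapt the classical weak maximum principle for uniformly elliptic operators to the one-dimensional operator $L_t$ with the time variable $t$ held fixed. Since $L_t v=-a(x,t)v_{xx}+b(x,t)v_x$ involves no time derivative once $t$ is frozen, this is a genuinely elliptic (ODE-type) comparison on the bounded set $D\subseteq(0,d_0)$, in contrast to the degenerate parabolic Lemma \ref{dpweak}; in particular, the conclusion will be that the maximum over $\bar D$ is already attained on $\partial D$.

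First I would record the uniform ellipticity and the boundedness of the coefficients. Using $\mu(0)\in[B^{-1},B]$ from \eqref{viscoe} together with $(\eta_2)_x\in[1/2,2]$ and $|(\eta_2)_{xx}|\leq 1/2$ from \eqref{LF-7}, one obtains, for every $x\in D$,
\begin{equation*}
a(x,t)=\mu(0)\frac{1}{(\eta_2)_x}\geq \frac{1}{2B}>0,\qquad |b(x,t)|=\left|\mu(0)\frac{(\eta_2)_{xx}}{(\eta_2)_x^2}\right|\leq 2B.
\end{equation*}
The crucial point is that these bounds are independent of $x$, so $L_t$ is uniformly elliptic on $\bar D$.

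Next I would argue by a \emph{strict-subsolution} perturbation. For a fixed large parameter $\lambda>0$ and small $\varepsilon>0$, set $v^\varepsilon(x):=v(x,t)+\varepsilon e^{\lambda x}$. A direct computation gives $L_t(e^{\lambda x})=(-a\lambda^2+b\lambda)e^{\lambda x}$, and by the uniform bounds above $-a\lambda^2+b\lambda\leq -\frac{1}{2B}\lambda^2+2B\lambda<0$ once $\lambda>4B^2$, so that
\begin{equation*}
L_t v^\varepsilon = L_t v+\varepsilon L_t(e^{\lambda x})<0\quad\text{in}~D.
\end{equation*}
If $v^\varepsilon$ attained a maximum at an interior point $x_1\in D$, then $v^\varepsilon_x(x_1)=0$ and $v^\varepsilon_{xx}(x_1)\leq0$, whence $L_t v^\varepsilon(x_1)=-a(x_1,t)v^\varepsilon_{xx}(x_1)\geq0$, contradicting the strict inequality. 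Hence $v^\varepsilon$ attains its maximum only on $\partial D$, and therefore
\begin{equation*}
\max_{\bar D}v\leq \max_{\bar D}v^\varepsilon=\max_{\partial D}v^\varepsilon\leq \max_{\partial D}v+\varepsilon\max_{\partial D}e^{\lambda x}.
\end{equation*}
Letting $\varepsilon\to0^+$ yields $\max_{\bar D}v=\max_{\partial D}v$, so any maximizer $x_0$ must lie on $\partial D$.

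I do not expect a genuine obstacle here, since the statement is the textbook one-dimensional elliptic weak maximum principle. The only point requiring a little care is the uniformity of the negativity $-a\lambda^2+b\lambda<0$ across all of $D$, which is precisely why the uniform lower bound $a\geq \frac{1}{2B}$ (strict ellipticity, guaranteed by $\mu(0)>0$ and \eqref{LF-7}) is needed; without it the perturbation could fail near points where the diffusion degenerates. Since that bound holds here, the argument is routine and mirrors the parabolic version in \cite{MR3925527} with the time derivative simply absent.
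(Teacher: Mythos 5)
Your proof is correct and follows essentially the same route as the paper: the same coefficient bounds $a\geq 1/(2B)$, $|b|\leq 2B$ from \eqref{viscoe} and \eqref{LF-7}, the same perturbation $v+\varepsilon e^{\beta x}$ with $\beta$ large enough to make it a strict subsolution, the interior-maximum contradiction $v_x=0$, $v_{xx}\leq 0$ forcing $L_t v^\varepsilon\geq 0$, and the limit $\varepsilon\to 0^+$. The only cosmetic difference is that the paper first treats the strict case $L_t v<0$ separately and then perturbs, whereas you fold both steps into one, and you spell out the final chain of inequalities that the paper leaves implicit.
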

	\begin{proof}
		Let us first suppose that $L_tv<0$ in $D$, and there exists $x_0\in D$ such that 
		\begin{equation*}
			v(x_0)=\max_{\bar{D}}v.
		\end{equation*}
		Then at this maximum point $x_0$, one has
		\begin{equation*}
			v_x(x_0)=0,~~v_{xx}(x_0)\leq 0,
		\end{equation*}
		which implies $L_tv\geq 0$ at the point $x_0$, and this leads to a contradiction. 
		
		Generally, we consider the auxiliary function $v^\varepsilon:=v+\varepsilon e^{\beta x}$ with $\varepsilon>0$ and $\beta>0$. For fixed $t\in (0,T]$, it follows from $T\leq\frac{1}{2M}$ and \eqref{LF-7} that
		\begin{equation}\label{Proof-TH2-6}
			\begin{aligned}
				&1/(2B)\leq a(x,t)=\mu(0)\frac{1}{(\eta_2)_x}\leq 2B,\\
				&|b(x,t)|\leq \mu(0)\frac{|(\eta_2)_{xx}|}{(\eta_2)_x^2}
				\leq B\cdot (4MT)\leq 2B.
			\end{aligned}
		\end{equation}
		Therefore it holds from \eqref{Proof-TH2-5} and \eqref{Proof-TH2-6} that
		\begin{equation*}
			\begin{aligned}
				L_tv^\varepsilon&=L_tv+\varepsilon L_t(e^{\beta x})
				\leq \varepsilon\left(-a\beta^2+b\beta\right)e^{\beta x}\\
				&\leq \varepsilon\left(-\frac{1}{2B}\beta^2+2B\beta\right)e^{\beta x}
				\leq -\varepsilon e^{\beta x}<0\quad\text{in}~D
			\end{aligned}
		\end{equation*}
		for sufficiently large $\beta>0$. Thus $v^\varepsilon$ attains its maximum on $\partial D$. Passing the limit $\varepsilon\rightarrow 0$ proves the desired result.
	\end{proof}
	
	Next, we establish the Hopf's lemma.
	
	\begin{lemma}\label{elHopf}
		Assume that $v\in C^2(D)\cap C(\bar{D})$. For fixed $t\in (0,T]$, 
		if $v$ satisfies \eqref{Proof-TH2-5}, and there exists a neighbourhood $D$ of $x=0$ such that
		\begin{equation}\label{Proof-TH2-7}
			v(0, t)>v(x, t)\quad\mathrm{for\ all}~x\in D. 
		\end{equation}
		Then it holds that
		\begin{equation*}
			\frac{\partial v(0, t)}{\partial \vec{n}}>0.
		\end{equation*}
		where $\vec{n}$ is the outer unit normal vector at the point $x=0$.
	\end{lemma}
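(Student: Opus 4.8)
The plan is to mirror the parabolic Hopf lemma (Lemma \ref{dpHopf}), exploiting the fact that here $t$ is frozen, so that $L_t$ is a genuine elliptic (ODE) operator in the single variable $x$ and no space-time cylinder geometry is required. Since $x=0$ is the left endpoint of the interval and the interior of $D$ lies to its right, the outer unit normal is $\vec{n}=-1$, and the assertion $\partial v(0,t)/\partial\vec{n}>0$ is equivalent to $v_x(0,t)<0$. The strict maximum hypothesis \eqref{Proof-TH2-7} already forces $v_x(0,t)\le 0$; the entire content of the lemma is to upgrade this to a \emph{strict} inequality by a barrier argument.

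Fixing $t$, I would choose $\ell>0$ small enough that $(0,\ell)\subset D$ and work on the shrunken interval $(0,\ell/2)$. Following the construction in Lemma \ref{dpHopf}, I introduce the barrier $q(\alpha,x)=e^{-\alpha(x-\ell)^2}-e^{-\alpha\ell^2}$ and set $\varphi(\varepsilon,\alpha,x)=v(x,t)-v(0,t)+\varepsilon q(\alpha,x)$, noting at once that $q(\alpha,0)=0$ and $q(\alpha,x)>0$ for $x\in(0,\ell)$. A direct computation gives $L_t q=\{2a\alpha-4a\alpha^2(x-\ell)^2-2\alpha b(x-\ell)\}e^{-\alpha(x-\ell)^2}$. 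The main (and essentially only) technical point is to guarantee $L_t q\le 0$ in the region, and this is precisely why the interval is shrunk to $(0,\ell/2)$: there $(x-\ell)^2\ge \ell^2/4$ stays bounded away from the degeneracy at the centre $x=\ell$, so that, combined with the uniform bounds $1/(2B)\le a\le 2B$ and $|b|\le 2B$ from \eqref{Proof-TH2-6}, the quadratic-in-$\alpha$ bracket is dominated by its leading term $-\tfrac{\ell^2}{2B}\alpha^2$. Hence there is $\alpha_0=\alpha_0(\ell,B)$ large enough with $L_t q(\alpha_0,x)\le 0$ on $(0,\ell/2)$, whence $L_t\varphi\le L_t v+\varepsilon L_t q\le 0$ by \eqref{Proof-TH2-5}.

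It remains to fix $\varepsilon$ and read off the conclusion. On the (discrete) boundary $\{0,\ell/2\}$ one has $\varphi(\varepsilon,\alpha_0,0)=0$, while $v(\ell/2,t)-v(0,t)<0$ by \eqref{Proof-TH2-7}; choosing $\varepsilon_0>0$ so small that $v(\ell/2,t)-v(0,t)+\varepsilon_0 q(\alpha_0,\ell/2)<0$ gives $\varphi(\varepsilon_0,\alpha_0,\cdot)\le 0$ on the boundary with equality only at $x=0$, together with $L_t\varphi\le 0$ inside. The weak maximum principle (Lemma \ref{elweak}) then forces $\varphi$ to attain its maximum at $x=0$, so that $\varphi(x,t)\le\varphi(0,t)$ for $x\in(0,\ell/2)$ and consequently $\partial\varphi(0,t)/\partial\vec{n}\ge 0$. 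Converting back through $\varphi=v-v(0,t)+\varepsilon_0 q$ and using $q_x(\alpha_0,0)=2\alpha_0\ell e^{-\alpha_0\ell^2}>0$ yields $\partial v(0,t)/\partial\vec{n}\ge 2\varepsilon_0\alpha_0\ell e^{-\alpha_0\ell^2}>0$, which is the desired strict inequality.

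I do not anticipate a serious obstacle here: the statement is the elliptic, one-dimensional analogue of the already-established parabolic Lemma \ref{dpHopf}, with the time derivative simply absent. The only care needed is the standard barrier calibration, namely restricting to $(0,\ell/2)$ so as to avoid the centre of the underlying ball where the barrier degenerates, and the sign bookkeeping of the normal derivative at a left endpoint, where $\vec{n}=-1$.
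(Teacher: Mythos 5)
Your proof is correct and follows essentially the same route as the paper's: the identical barrier $e^{-\alpha(x-\ell)^2}-e^{-\alpha\ell^2}$ on the shrunken interval $(0,\ell/2)$, the same choice of large $\alpha$ using the bounds \eqref{Proof-TH2-6}, the same small-$\varepsilon$ calibration at $x=\ell/2$, and the same application of the weak maximum principle (Lemma \ref{elweak}) to read off the strict sign of the normal derivative. The only differences are cosmetic (notation $q$ versus the paper's $\varphi$, and your explicit remark that $\vec{n}=-1$ so the claim means $v_x(0,t)<0$).
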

	\begin{proof}
		For any $x, \ell\in D$, define
		\begin{equation*}
			\varphi(\beta,x)=e^{-\beta(x-\ell)^2}-e^{-\beta \ell^2},
		\end{equation*}
		where $\beta>0$ will be selected below. By \eqref{Proof-TH2-5}, one may estimate
		\begin{equation*}
			\begin{aligned}
				L_t\varphi(\beta,x)&=\left(-4a(x-\ell)^2\beta^2+2a\beta+2b(x-\ell)\beta\right)e^{-\beta(x-\ell)^2}\\
				&\leq \left(-\frac{\ell^2}{2B}\beta^2+(4B+4B\ell)\beta\right)e^{-\beta(x-\ell)^2}\\
				&\leq 0\quad \text{for}\ x\in(0,\ell/2),
			\end{aligned}
		\end{equation*}
		provided $\beta\geq \beta_0:=\beta_0(\ell,B)>0$.   For such a $\beta_0$,  it follows that
		\begin{equation}\label{Proof-TH2-8}
			L_t\big(v(x, t)-v(0, t)+\epsilon_0 \varphi(\beta_0,x)\big)\leq 0\quad\text{for}\ x\in (0,\ell/2). 
		\end{equation}
		
		In view of \eqref{Proof-TH2-7}, there exists a small constant 
		$\varepsilon_0>0$ such that
		\begin{equation*} 
			v(0, t)\geq v(x, t)+\varepsilon_0\varphi(\beta_0,x)\quad \text{for}\ x=\ell/2.
		\end{equation*}
		On the other hand, it holds that
		\begin{equation}\label{Proof-TH2-9}
			v(x, t)-v(0, t)+\varepsilon_0\varphi(\beta_0,x)= 0\quad \text{for}\ x=0. 
		\end{equation}
		
		Applying Lemma \ref{elweak} to \eqref{Proof-TH2-7}--\eqref{Proof-TH2-8}, we obtain
		\begin{equation*}
			v(x, t)-v(0, t)+\varepsilon_0 \varphi(\beta_0,x)\leq 0\quad\text{for}\ x\in (0, \ell/2).
		\end{equation*}
		This together with \eqref{Proof-TH2-9} yields
		\begin{equation*}
			\frac{\partial v(0, t)}{\partial \vec{n}}\geq -\varepsilon_0\frac{\partial \varphi(\beta_0,0)}{\partial \vec{n}}\geq 2\varepsilon_0\beta_0\ell e^{-\ell^2}>0,
		\end{equation*}
		and the proof is completed.
	\end{proof}
	
	The following strong maximum principle is direct consequence of the Hopf's lemma, whose proof can be found
	for instance in \cite{MR1625845}.
	\begin{lemma}\label{elstrong}
		Assume that $v\in C^2(D)\cap C(\bar{D})$. For fixed $t\in (0,T]$, 
		if $v$ satisfies \eqref{Proof-TH2-5}, and attains its maximum at an interior point $x_0\in D$, then $v(x, t)\equiv v(x_0,t)$ for all $x\in D$. 
	\end{lemma}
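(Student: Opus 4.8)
The plan is to obtain the strong maximum principle as a direct consequence of the Hopf's lemma (Lemma \ref{elHopf}), following the classical scheme in \cite{MR1625845}. Fix $t\in(0,T]$ and, regarding $D$ as an interval (so that the conclusion $v\equiv v(x_0,t)$ on all of $D$ is meaningful), set $M_0:=v(x_0,t)=\max_{\bar D}v(\cdot,t)$. It then suffices to prove that the relatively closed set $\Sigma:=\{x\in D:\ v(x,t)=M_0\}$, which is nonempty because $x_0\in\Sigma$, exhausts $D$. I would argue by contradiction, assuming that the open set $\{x\in D:\ v(x,t)<M_0\}$ is nonempty.

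First I would manufacture the one-sided configuration required by Hopf's lemma. Choosing a point $y$ with $v(y,t)<M_0$ and letting $(p,q)$ be the connected component of $\{v(\cdot,t)<M_0\}$ that contains $y$, I observe that this component cannot be all of $D$, since $x_0\in\Sigma$ is excluded from it; hence $(p,q)$ is a proper subinterval and at least one of its endpoints, say $q$, is an interior point of $D$. By continuity $v(q,t)=M_0$ while $v(\cdot,t)<M_0$ throughout $(p,q)$, so on the one-sided neighborhood $(q-r,q)$ the point $q$ plays exactly the role of the distinguished boundary point $x=0$ in Lemma \ref{elHopf}.

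Next I would invoke the Hopf's lemma at $q$. Since the coefficient bounds \eqref{Proof-TH2-6} on $a$ and $b$ hold uniformly throughout $D$, the barrier used in the proof of Lemma \ref{elHopf} may be re-centered so as to sit in $(p,q)$ and abut $q$; because that construction is invariant under translation and reflection of the spatial variable, it yields a strictly positive outer normal derivative at $q$, that is, $v_x(q,t)\neq 0$. On the other hand, $q$ is an interior point of $D$ at which $v(\cdot,t)$ attains its global maximum $M_0$, which forces $v_x(q,t)=0$. This contradiction shows $\{v(\cdot,t)<M_0\}=\emptyset$, and therefore $v(x,t)\equiv M_0=v(x_0,t)$ on $D$.

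The genuinely delicate point, rather than the logical skeleton, is confirming that Hopf's lemma applies at the generic interior endpoint $q$ and not only at the fixed point $x=0$ for which Lemma \ref{elHopf} was phrased. This reduces to checking that every estimate in the proof of Lemma \ref{elHopf} depends solely on the global bounds \eqref{Proof-TH2-6} and on the radius parameter $\ell$, with no use of the particular location of the boundary point; once this uniformity is noted, the remaining steps are entirely routine, which is precisely why the statement may be deferred to \cite{MR1625845}.
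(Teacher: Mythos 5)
Your proposal is correct and is exactly the argument the paper has in mind: the paper's own ``proof'' simply declares the lemma a direct consequence of Hopf's lemma (Lemma \ref{elHopf}) and defers to \cite{MR1625845}, and your contradiction argument via a connected component of $\{v(\cdot,t)<M_0\}$ with an interior endpoint is the standard deduction found there. Your attention to the only nontrivial point---that the barrier in Lemma \ref{elHopf} uses nothing but the uniform bounds \eqref{Proof-TH2-6} and hence transfers, after translation and reflection, to an arbitrary interior endpoint $q$---is precisely what justifies the paper's one-line citation.
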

	
	Now, we are ready to show Theorem \ref{thm2}. 
	\begin{proof}[Proof of Theorem \ref{thm2}] First, for any fixed $t\in (0,T]$, it follows from Lemma \ref{elweak} that  $v_2(\cdot, t)$  attains its  maximum on the set
		$$\{x=0\}\cup \{x=d_0\}.$$
		On one hand, by $w_0(d_0)<0$ due to \eqref{inda5},  and the continuity on time of the solution,  there exists a time $t_0\in (0,T]$ such that 
		$$v_2(d_0,t)<0\quad \mathrm{for}\ t\in (0,t_0].$$
		On the other hand, thanks to $\eqref{main-LF-3}_3$, $v_2(0,t)=0$ for $ t\in (0,t_0]$.
		
		Hence, by Lemma \ref{elstrong}, $v_2(\cdot, t)$  attains its  maximum on the set 
		$\{x=0\}$, namely, for fixed $t\in(0,t_0]$, 
		$$v_2(x,t)<v_2(0,t)\quad \text{for}\ x\in (0,d_0),$$
		which, together with Lemma \ref{elHopf}, implies 
		$$\frac{\partial v_2(0,t)}{\partial \vec{n}}>0.$$ 
		This leads to a contradiction with $(v_2)_x=0$ on $\partial \Omega \times (0,T]$ in $\eqref{main-LF-3}_3$. 
	\end{proof}
	
	\section*{Acknowledgments}
	Li's research is partially supported by the National Natural Science Foundation of China grants 11931010, 12226326, and the key research project of Academy for Multidisciplinary Studies, Capital Normal University. Wang's research was supported by the Grant No. 830018 from China.

\end{document}